\newtheorem{thm}{Theorem}[section]
\newtheorem{lem}[thm]{Lemma}
\newtheorem{conj}[thm]{Conjecture}
\title{Embedding the Complete Expansion Graph in Books}
\author{ Zeling Shao, Chunjin Ren, Zhiguo Li$^{*}$ \\
{\small School of Science, Hebei University of Technology, Tianjin 300401, China}
\date{}
\footnote{Corresponding author. E-mail: zhiguolee@hebut.edu.cn}
\footnote{This work is supported by the Natural Science Foundation of Hebei Province (No.A2019402043). }
}
\begin{document}
\baselineskip 0.65cm

\maketitle

\begin{abstract}

 A book embedding of a graph consists of an embedding of its vertices along the spine of a book, and an embedding of its edges on the pages such that edges embedded on the same page do not intersect.
 The pagenumber is the minimum number of pages in which the graph $G$ can be embedded. The main purpose of this paper is to study  the book embedding of the complete expansion graph. This is the first work about it, 
 and some exact pagenumbers of the complete expansion graphs of some special graphs are obtained.

\bigskip
\noindent\textbf{Keywords:} Book embedding; Expansion graph; Pagenumber.

\noindent\textbf{2000 MR Subject Classification.} 05C10
\end{abstract}

\section{Introduction}
~~~~~The problem of embedding graphs in books is an important research content of graph theory. 
P. C. Kainen$^{[1]}$ first introduced the book embedding and later investigated by F. Bernhart and P. C. Kainen$^{[2]}$. A book embedding of a graph $G$ consists of placing the vertices of G on a spine and assigning edges of the graph to pages so that edges in the same page do not cross each other. The order of the vertices is called the printing cycle of the embedding. The minimum number of pages in which a graph can be embedded is called the pagenumber of the graph$^{[2,3,4]}$, and the correspondent order of the vertices on the spine is called the optimal order of the minimal embedding.

Note that one can neglect the exact geometry, as two edges that are drawn on the same page cross if and only if their endpoints alternate along the spine. We say that an edge $e$ nests a vertex $v$ iff one endpoint of $e$ is to the left of $v$ along the spine and the other endpoint of $e$ to the right. We also say that an edge $e$ nests an edge $e^{'}$ iff both $e$ and $e^{'}$ are drawn on the same page and both endpoints of $e^{'}$ are nested by $e$. Nested edges do not cross.

Book embedding has been applied to VLSI design, sorting with parallel stacks, single-row routing,  fault-tolerant processor arrays, Turing  machine graphs and many other fields $^{[5]}$. Determining the pagenumber of an arbitrary graph embedded in book is NP-complete even if the order of vertices on the spine is fixed. Some pagenubmers of particular families of graphs have been discussed, for example: complete graphs$^{[2]}$, complete bipartite graphs$^{[6,7]}$, regular graphs$^{[8]}$, toroidal graphs$^{[9]}$, incomplete hypercubes$^{[10]}$, strong product of some graphs $^{[11]}$, and Cartesian product of some graphs$^{[12]}$, etc.

The concept of expansion graph was given in 2009 by A. Yongga and Siqin at first$^{[13]}$. It has caught the attention of some scholars to start studying the various properties of the expansion graph. Some scholars have studied the coloring  problems of the expansion graph$^{[14]}$. The hamiltonicity of the complete expansion graph has been studied in [15], etc.

In this paper, we mainly investigate the pagenumber of the complete expansion graph $\mathcal{E}_{c}(G)$ of $G$. Section 2 mainly introduces the definition and some properties of the complete expansion graph. In section 3, we get the relationship between the pagenumber of the complete expansion graph of the subgraph and the pagenumber of the complete expansion graph of the supergraph. For any simple graph $G$, the pagenumber of $\mathcal{E}_{c}(G)$ is related to the maximum degree of $G$ and pagenumber of $G$. We obtain the upper bound and the lower bound of the pagenumber of $\mathcal{E}_{c}(G)$. For the complete expansion graphs of some special graphs, such as star graph $S_{m}$, tree $T$, $M\ddot{o}bius$ Ladder $M_{h}$, Petersen graph $P$ and complete graph $K_{2m}$, the exact pagenumbers of them are obtained.



\section{Preliminaries}
~~~~~The graphs considered in this article are simple, connected and undirected.
Given a graph $G$, with edge set $E(G)$ and vertex set $V(G)$, we call that two vertices $i$ and $j$ are adjacent, if there is an edge $e=(i,j)\in E(G)$. The ends $i$ and $j$ are said to be incident with the edge $e$, and vice versa.
 The degree of a vertex $v$ in a graph $G$, denoted by $d_{G}(v)$, is the number of edges of $G$ incident with $v$.  Let $\Delta(G)$ be the maximum degrees of the vertices of $G$.  We denote the complete graph with $n$ vertices by $K_{n}$. It is a standard exercise to show that the pagenumber $pn(K_{n})$ of the complete graph $K_{n}$ is $\lceil\frac{n}{2}\rceil$ $^{[2]}$. 

If $G$ has vertices $v_{1},v_{2},\cdots,v_{n}$, the sequence $(d_{1}, d_{2}, ..., d_{n})$ is called a degree sequence of $G$. Let the set $A=V(G)\cup E(G)$, $B=\{K_{d_{1}},K_{d_{2}},\cdots,K_{d_{n}}\}\cup E(G)$, where $K_{d_{i}}$ represents the complete graph with $d_{i}$ vertices, $i=1,\cdots,n$. For a given graph $G$, the bijection $f$ of $A$ to $B$ is defined as:

\begin{equation} f(x)=
\begin{cases}
K_{d_{i}}~~~~~~~~x\in V(G),d_{G}(x)=d_{i};\\
x~~~~~~~~~~~~~~~~x\in E(G).
\end{cases}
\end{equation}

 $f$ is called the complete expansion transformation of $G$. The generated graph after complete expansion transformation is called the complete expansion graph of $G$, denoted by $\mathcal{E}_{c}(G)$. $\mathcal{E}_{c}(G)$ is a graph with vertex set $V(\mathcal{E}_{c}(G))=\{(v,m)|v\in V(G),m\in E(G)$, $v$ is incident with $m\}$, and edge set $E(\mathcal{E}_{c}(G))=E_{1}\cup E_{2}$, where $E_{1}=\{((v,m)(v,n))|v\in V(G),m,n\in E(G)\}, E_{2}=\{((v,m)(w,m))|v,w\in V(G),m\in E(G)\}$(see an example in Figure 1).

By the definition of complete expansion graph, it is easy to get the following properties:

\begin{lem}$^{[13]}$
For $x,y\in V(G)$, the complete expansion transformation $f$ of $G$ satisfies:

(1) $x\neq y\Leftrightarrow V(f(x))\cap V(f(y))=\emptyset$;

(2) $xy\in E(G)$, there is a unique $(x^{'}, y^{'})\in V(f(x))\times V(f(y)), x^{'} y^{'} \in E(\mathcal{E}_{c}(G))$ ;

(3) $\forall x\in V(G)$, $[V(f(x)),V(\mathcal{E}_{c}(G))\backslash V(f(x))]=d_{G}(x)$.


(4) 
For $xy, xz\in E(G), y\neq z$ ,
if $(x^{'}, y^{'})\in V(f(x))\times V(f(y))$ with $x^{'}y^{'}\in E(\mathcal{E}_{c}(G))$, then for any $z^{'}\in V(f(z))$, $x^{'}z^{'}\not\in E(\mathcal{E}_{c}(G))$.
\end{lem}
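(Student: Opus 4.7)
The plan is to observe that all four statements follow almost immediately from unpacking the definition, so the main work is simply tracing through what $V(f(x))$ and the edge sets $E_1, E_2$ look like. First I would record the key observation that, by the definition of $\mathcal{E}_c(G)$, the set $V(f(x))$ is precisely $\{(x,m) : m \in E(G) \text{ incident with } x\}$, so that every vertex of $\mathcal{E}_c(G)$ carries a first coordinate $x \in V(G)$ and a second coordinate $m \in E(G)$ incident with $x$. With this, each part becomes a short bookkeeping argument.

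For part~(1), two vertices $(x,m)$ and $(y,n)$ are equal only if $x = y$, so $V(f(x)) \cap V(f(y)) = \emptyset$ whenever $x \neq y$, and the reverse direction is trivial since $V(f(x))$ is nonempty (as $G$ is connected, $d_G(x)\ge 1$). For part~(2), given $xy \in E(G)$, the set $E_2$ contributes precisely the edge between $(x,xy)$ and $(y,xy)$; to see uniqueness, note that any edge between $V(f(x))$ and $V(f(y))$ must belong to $E_2$ (edges in $E_1$ share their first coordinate), and an $E_2$-edge forces the same second coordinate $m$, which must be incident with both $x$ and $y$ — hence $m = xy$, giving the unique pair $((x,xy),(y,xy))$.

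For part~(3), iterate the uniqueness in~(2) over all neighbors of $x$: each edge of $G$ incident with $x$ contributes exactly one edge between $V(f(x))$ and $V(\mathcal{E}_c(G))\setminus V(f(x))$, and conversely every such edge arises this way, giving a count of $d_G(x)$. For part~(4), apply~(2) to conclude $x' = (x,xy)$ and $y' = (y,xy)$. If $x'z'$ were an edge, it would lie in $E_1$ (impossible, since $x \neq z$ forces distinct first coordinates) or in $E_2$ (forcing $z'$ to have second coordinate $xy$, i.e., $xy$ incident with $z$); but $xy$ is incident only with $x$ and $y$, and $z \notin \{x,y\}$ since $y \neq z$ and $xz \in E(G)$ precludes $z = x$ in a simple graph, a contradiction.

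The only ``obstacle'' is taking care to distinguish the two edge types $E_1$ and $E_2$ in each argument, since $E_1$ uses a common first coordinate while $E_2$ uses a common second coordinate; once this is kept straight, no step involves anything beyond matching coordinates. I would therefore present the proof as four short numbered bullets, each no more than two or three lines, keeping the coordinate conventions uniform throughout.
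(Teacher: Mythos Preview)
Your argument is correct: each part is indeed a direct consequence of the coordinate description $V(f(x))=\{(x,m):m\text{ incident with }x\}$ together with the dichotomy that $E_1$-edges share the first coordinate while $E_2$-edges share the second. The only minor point to tighten is in part~(3), where you should also remark that if $y$ is \emph{not} adjacent to $x$ then there is no edge at all between $V(f(x))$ and $V(f(y))$ (since no $m\in E(G)$ is incident with both $x$ and $y$); you use this implicitly when you say ``conversely every such edge arises this way,'' but it deserves one explicit clause.

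As for comparison with the paper: the paper does not prove this lemma at all. It is quoted from reference~[13] and stated without proof, so your write-up supplies something the paper omits rather than reproducing or diverging from an existing argument.
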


\begin{figure}[htbp]
\centering
\includegraphics[height=5cm, width=0.6\textwidth]{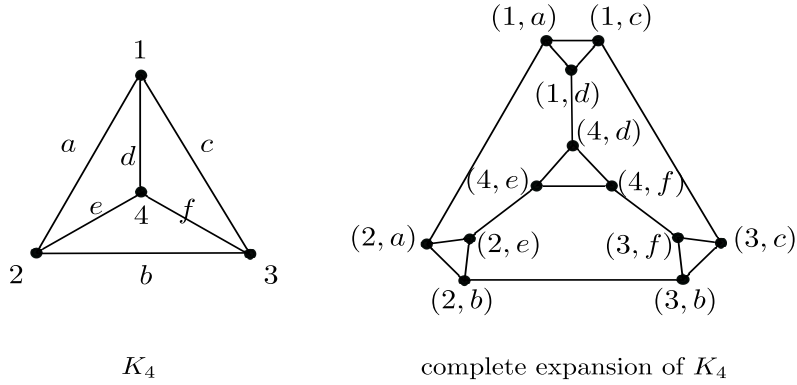}
\caption{$K_{4}$ and the complete expansion graph of $K_{4}$}
\label{1}
\end{figure}
Given a simple graph $G$, if $vw\in E(G)$, after complete expansion transfermation $f$, it follows from Lemma 2.1 that there is a unique edge connect complete graph $K_{d(v)}$ and $K_{d(w)}$.


\begin{lem}
If $H$ is the subgraph of $G$, then $\mathcal{E}_{c}(H)$ is the subgraph of $\mathcal{E}_{c}(G)$.
\end{lem}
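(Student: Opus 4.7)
The plan is to unpack the definition of the complete expansion graph and directly verify the two inclusions $V(\mathcal{E}_c(H))\subseteq V(\mathcal{E}_c(G))$ and $E(\mathcal{E}_c(H))\subseteq E(\mathcal{E}_c(G))$. Because $H$ is a subgraph of $G$, we have $V(H)\subseteq V(G)$ and $E(H)\subseteq E(G)$, and crucially the incidence relation in $H$ is just the restriction of the incidence relation in $G$ to $V(H)\times E(H)$. This is the structural fact that will drive the whole argument.

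First I would verify the vertex inclusion. A typical vertex of $\mathcal{E}_c(H)$ has the form $(v,m)$ with $v\in V(H)$, $m\in E(H)$, and $v$ incident with $m$ in $H$. Since $V(H)\subseteq V(G)$ and $E(H)\subseteq E(G)$, and since incidence is preserved in the supergraph, the pair $(v,m)$ also satisfies $v\in V(G)$, $m\in E(G)$ and $v$ is incident with $m$ in $G$. Hence $(v,m)\in V(\mathcal{E}_c(G))$.

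Next I would handle the two edge classes $E_1$ and $E_2$ separately. For an edge of $E_1$ in $\mathcal{E}_c(H)$, say $((v,m),(v,n))$, the defining condition is that $m,n\in E(H)$ are both incident with $v\in V(H)$; by the same reasoning as above, these edges and $v$ live in $G$ with the same incidences, so $((v,m),(v,n))\in E_1\bigl(\mathcal{E}_c(G)\bigr)$. For an edge of $E_2$ in $\mathcal{E}_c(H)$, say $((v,m),(w,m))$, we have $m=vw\in E(H)\subseteq E(G)$, so the same pair appears in $E_2\bigl(\mathcal{E}_c(G)\bigr)$. Combining the two cases yields $E(\mathcal{E}_c(H))\subseteq E(\mathcal{E}_c(G))$.

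There is no real obstacle here, since nothing quantitative needs to be checked; the only point worth being careful about is the possible mismatch between $d_H(v)$ and $d_G(v)$. Even though the complete subgraph $K_{d_H(v)}$ attached to $v$ in $\mathcal{E}_c(H)$ is smaller than the $K_{d_G(v)}$ attached to $v$ in $\mathcal{E}_c(G)$, its vertex set (indexed by edges of $H$ at $v$) is a subset of the vertex set of the larger complete graph (indexed by edges of $G$ at $v$), and induces the corresponding smaller clique. This observation makes the edge-set inclusion transparent and completes the proof.
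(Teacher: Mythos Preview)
Your proof is correct and follows essentially the same approach as the paper: both unpack the definition of $\mathcal{E}_c$ and verify the subgraph inclusions directly, using that $d_H(v)\le d_G(v)$ so the clique $K_{d_H(v)}$ sits inside $K_{d_G(v)}$. The only cosmetic difference is that you work with the explicit $(v,m)$ description of vertices and the $E_1$/$E_2$ edge classes, whereas the paper phrases the same check in terms of the expansion transformation $f$ and then invokes Lemma~2.1; your version is in fact the more explicit of the two.
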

\begin{proof}
Since $H$ is the subgraph of $G$, $V(H)\subseteq V(G)$ and $E(H)\subseteq E(G)$. $\forall v\in V(H)$, suppose $d_{H}(v)=m$, $d_{G}(v)=n$. It is clear that $m\leq n$. Suppose the edges incident with the vertex $v$ in $H$ are $e_{1},e_{2},\cdots,e_{m}$ and $e_{1},e_{2},\cdots,e_{m},e_{m+1},\cdots,e_{n}$  in $G$. For convenience, we denote $f_{H}$, $f_{G}$ as the complete expansion transformation of $H$ and $G$ respectively. By the definition of complete expansion transformation, $f_{H}(e_{i})=e_{i}\in \mathcal{E}_{c}(H)$, where $1\leq i\leq m$, $f_{G}(e_{j})=e_{j}\in \mathcal{E}_{c}(G)$, where $1\leq j\leq n$. $f_{H}(v)=K_{m}$, $f_{G}(v)=K_{n}$. $K_{m}$ is the subgraph of $K_{n}$. 
In consideration of the arbitrariness of $v$, the result follows from Lemma 2.1.

\end{proof}

The important tools for the proofs in the section 3 are the following classical theorem of Bernhart.

\begin{lem}$^{[2]}$
If $H$ is the subgraph of $G$, then $pn(H)\leq pn(G)$.
\end{lem}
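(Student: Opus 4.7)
The plan is to produce a book embedding of $H$ by restricting an optimal book embedding of $G$. First I would fix a book embedding of $G$ that uses exactly $pn(G)$ pages: this consists of a linear order $\sigma$ of $V(G)$ along the spine together with a page assignment $\pi\colon E(G)\to\{1,2,\ldots,pn(G)\}$ such that no two edges receiving the same page cross, i.e.\ their endpoints do not alternate along $\sigma$.

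Next I would define a candidate embedding of $H$ as follows: place the vertices of $H$ on the spine in the order induced by $\sigma$ (i.e.\ $\sigma$ with the vertices of $V(G)\setminus V(H)$ deleted), and assign each edge $e\in E(H)\subseteq E(G)$ to page $\pi(e)$. Clearly this uses at most $pn(G)$ pages, so it suffices to check that it is a valid book embedding.

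The verification is the heart of the proof. Given two edges $e_1,e_2\in E(H)$ with $\pi(e_1)=\pi(e_2)$, they did not cross in the embedding of $G$, meaning their endpoints (all of which lie in $V(H)$) do not alternate in $\sigma$. Since deleting the vertices of $V(G)\setminus V(H)$ from $\sigma$ preserves the relative order of any four remaining vertices, the endpoints of $e_1$ and $e_2$ still do not alternate in the restricted order, so $e_1$ and $e_2$ do not cross on the same page of the $H$-embedding either. Hence the restriction is a legitimate book embedding of $H$, proving $pn(H)\leq pn(G)$.

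I do not anticipate a serious obstacle: the argument is purely a restriction argument, and the only subtlety is the observation that vertex deletion on the spine cannot turn a non-crossing pair into a crossing pair — if anything, it can only eliminate alternations, never create them. Care is only needed to note that because $E(H)\subseteq E(G)$, every edge of $H$ already has a page assigned, so no new crossings can appear from edges that were not present in $G$.
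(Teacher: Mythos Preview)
Your argument is correct: restricting an optimal book embedding of $G$ to the vertices and edges of $H$ yields a valid book embedding of $H$ on at most $pn(G)$ pages, since deleting spine vertices preserves the relative order of the remaining ones and hence cannot create new alternations. Note, however, that the paper does not actually prove this lemma---it is quoted as a known result from Bernhart and Kainen~[2]---so there is no ``paper's own proof'' to compare against; your restriction argument is precisely the standard justification behind that cited fact.
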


\begin{lem}$^{[2]}$
Let $G$ be a connected graph. Then

(i) $pn(G)=0$ if and only if $G$ is a path.

(ii) $pn(G)\leq 1$ if and only if $G$ is outerplanar.

(iii) $pn(G)\leq 2$ if and only if $G$ is a subgraph of a hamiltonian planar graph.

\end{lem}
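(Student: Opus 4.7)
The plan is to establish each of the three equivalences by arguing both directions separately, noting that an embedding on $0$ pages permits only spine-colinear edges.

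For (i), I would first argue the forward direction: if $pn(G)=0$ then no page is available, so every edge must lie on the spine itself, which forces each edge to connect two spine-consecutive vertices. Since $G$ is connected, the sequence of such edges visits every vertex exactly once, so $G$ is a path. Conversely, any path can be laid out with its vertices in natural order along the spine so that all edges sit on the spine, which gives $pn(G)=0$.

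For (ii), the forward direction observes that a $1$-page embedding is a crossing-free drawing in a closed half-plane whose bounding line contains all vertices; this is exactly a planar embedding with every vertex on the unbounded face, i.e., outerplanarity. For the reverse direction, I would take a fixed outerplanar embedding of $G$, read off the cyclic order of $V(G)$ around the outer face, cut this cycle at any vertex to get a spine ordering, and place every edge on a single page; planarity of the original embedding together with all vertices lying on the outer boundary guarantees that no two edges cross on that page.

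For (iii), the forward direction glues the two half-planes of a $2$-page embedding of $G$ along the spine and then closes the spine endpoints by an extra arc to form a Jordan curve $C$; this produces a planar drawing of the graph $G\cup C\cup\{\text{spine-consecutive edges}\}$, which is planar and has $C$ as a Hamilton cycle, so $G$ is a subgraph of a Hamiltonian planar graph. Conversely, given a Hamiltonian planar super-graph $H\supseteq G$, I would fix a planar embedding in which a Hamilton cycle $C=v_1v_2\cdots v_n v_1$ bounds interior and exterior regions, order $v_1,\dots,v_n$ along the spine, and assign each non-cycle edge of $H$ to page $1$ or page $2$ according to whether it lies inside or outside $C$; planarity prevents crossings within each page, so $G\subseteq H$ admits a $2$-page embedding.

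The main obstacle will be the topological bookkeeping for (iii): one must use the Jordan curve theorem to justify that the two pages correspond precisely to the two sides of $C$, and conversely that folding the two half-planes along the spine produces a genuine planar drawing in which the added wrap-around edge is realizable without new crossings. The other parts are essentially definition-unwinding once the convention that spine-adjacent edges consume no page is made explicit.
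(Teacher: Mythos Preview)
The paper does not actually prove this lemma; it is quoted verbatim from Bernhart and Kainen~[2] and used as a black box. Your proof sketch is correct and is essentially the classical argument found in that reference: part~(i) unwinds the convention that spine-consecutive edges need no page, part~(ii) identifies a one-page layout with an outerplanar drawing, and part~(iii) uses the Jordan curve given by the Hamilton cycle to split the plane into the two pages (and conversely glues the two half-planes along the spine). There is nothing to compare against in the present paper, but your plan matches the standard proof and would go through without difficulty.
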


By Lemma 2.4, it is clear that non-planar graphs require at least three pages to embed. 

\section{Main Result}
~~~~~In this section, we study the book embedding of complete expansion graphs and get some results about it. It is obvious that the complete expansion graph of a path or a cycle is also a path or a cycle, hence the pagenumber of a path or a cycle do not change after complete expansion transformation. For general graph, the results are as follows.

\begin{thm}
If $H$ is the subgraph of $G$, then $pn(\mathcal{E}_{c}(H))\leq pn(\mathcal{E}_{c}(G))$.
\end{thm}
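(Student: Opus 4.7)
The plan is to chain together the two monotonicity results already stated in the excerpt. Specifically, I would first invoke Lemma 2.2, which tells us that the complete expansion transformation preserves the subgraph relation: from $H \subseteq G$ we immediately obtain $\mathcal{E}_{c}(H) \subseteq \mathcal{E}_{c}(G)$. Then, applying Bernhart's classical monotonicity result (Lemma 2.3) to the pair $(\mathcal{E}_{c}(H), \mathcal{E}_{c}(G))$ in place of $(H,G)$ yields $pn(\mathcal{E}_{c}(H)) \leq pn(\mathcal{E}_{c}(G))$, which is exactly the desired conclusion.

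So the proof is essentially a one-line composition of two prior lemmas, and there is no real obstacle to overcome; the work has already been done in establishing Lemma 2.2. If anything, I might briefly remark on why Lemma 2.2 is the nontrivial ingredient here: the expansion transformation replaces each vertex $v$ of degree $d$ by a copy of $K_{d}$ and keeps the edges of $G$ as connecting edges between these cliques, and one must check that restricting to $V(H) \subseteq V(G)$ with $E(H) \subseteq E(G)$ produces cliques $K_{d_{H}(v)}$ that embed into the larger cliques $K_{d_{G}(v)}$, together with a subset of the connecting edges. This observation is already packaged in Lemma 2.2, so no new argument is required.

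In short, the proof I would write is: by Lemma 2.2, $\mathcal{E}_{c}(H)$ is a subgraph of $\mathcal{E}_{c}(G)$; hence by Lemma 2.3, $pn(\mathcal{E}_{c}(H)) \leq pn(\mathcal{E}_{c}(G))$.
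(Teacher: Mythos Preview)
Your proposal is correct and matches the paper's proof essentially verbatim: the paper also invokes Lemma~2.2 to conclude $\mathcal{E}_{c}(H)$ is a subgraph of $\mathcal{E}_{c}(G)$, then applies Lemma~2.3 to obtain the pagenumber inequality.
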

\begin{proof}
By the Lemma 2.2, if $H$ is the subgraph of $G$, then $\mathcal{E}_{c}(H)$ is the subgraph of $\mathcal{E}_{c}(G)$. By Lemma 2.3, $pn(\mathcal{E}_{c}(H))\leq pn(\mathcal{E}_{c}(G))$.
\end{proof}

\begin{thm}
For any simple graph $G$ with $n$ vertices, $\lceil\frac{\Delta(G)}{2}\rceil\leq pn(\mathcal{E}_{c}(G))\leq pn(G)+\lceil\frac{\Delta(G)}{2}\rceil$.
\end{thm}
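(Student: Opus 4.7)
The plan is to prove the two inequalities separately. For the lower bound, I would locate a large clique inside $\mathcal{E}_{c}(G)$ and invoke Lemma 2.3. Concretely, pick a vertex $v$ of $G$ with $d_{G}(v)=\Delta(G)$; by the definition of the complete expansion transformation, $f(v)=K_{\Delta(G)}$ is a subgraph of $\mathcal{E}_{c}(G)$. Combining this with the standard fact $pn(K_{n})=\lceil n/2\rceil$ noted in Section~2 and with Lemma 2.3 immediately gives $pn(\mathcal{E}_{c}(G))\geq \lceil \Delta(G)/2\rceil$.

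For the upper bound, my plan is to start from an optimal book embedding of $G$ with spine order $v_{1},v_{2},\ldots,v_{n}$ using $pn(G)$ pages, and to lift it to $\mathcal{E}_{c}(G)$ by blowing up each vertex into its associated clique. I would place the $d(v_{i})$ copies of $v_{i}$ as a contiguous block $B_{i}$ on the spine, in the overall order $B_{1},B_{2},\ldots,B_{n}$. Within each $B_{i}$, letting the left-neighbors of $v_{i}$ be $v_{a_{1}},\ldots,v_{a_{s}}$ with $a_{1}<\cdots<a_{s}<i$ and the right-neighbors be $v_{b_{1}},\ldots,v_{b_{t}}$ with $i<b_{1}<\cdots<b_{t}$, I would order the copies from left to right as those associated with $v_{i}v_{a_{s}},v_{i}v_{a_{s-1}},\ldots,v_{i}v_{a_{1}}$ followed by those associated with $v_{i}v_{b_{t}},\ldots,v_{i}v_{b_{1}}$. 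Then I would route every $E_{2}$-edge on the same page as the corresponding edge of $G$ (using $pn(G)$ pages) and route all $E_{1}$-edges by embedding each block's $K_{d(v_{i})}$ on $\lceil \Delta(G)/2\rceil$ additional pages; since distinct blocks occupy pairwise disjoint spine intervals, these additional pages can be reused across blocks.

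The main obstacle is verifying that the $E_{2}$-edges placed on a single page remain non-crossing. For two $E_{2}$-edges whose underlying $G$-edges have disjoint endpoint sets, the nesting or disjointness of the associated blocks is inherited directly from the $G$-embedding, so no crossing can arise regardless of intra-block positions. The delicate case is when two $E_{2}$-edges on the same page come from $G$-edges sharing a vertex $v_{i}$: in $G$ these edges meet at $v_{i}$, but in $\mathcal{E}_{c}(G)$ they land on distinct copies inside $B_{i}$. The intra-block ordering above is designed precisely so that any two left-going edges at $v_{i}$ nest (the copy for the farther left-neighbor lies further right in $B_{i}$), any two right-going edges at $v_{i}$ nest (the copy for the farther right-neighbor lies further left in the right half of $B_{i}$), and any left-right pair at $v_{i}$ forms disjoint intervals because all left-copies precede all right-copies within $B_{i}$. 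A short case analysis along these lines should complete the construction with the claimed total of $pn(G)+\lceil \Delta(G)/2\rceil$ pages.
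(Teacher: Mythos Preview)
Your proposal is correct and follows essentially the same strategy as the paper: the lower bound is identical, and for the upper bound both you and the paper blow up each $v_{i}$ into a contiguous block on the spine, route the $E_{2}$-edges on the original $pn(G)$ pages, and reuse $\lceil\Delta(G)/2\rceil$ fresh pages for the clique edges inside the pairwise disjoint blocks. The paper phrases the construction via a Hamiltonian-cycle/Jordan-curve picture and says only that one ``can exchange the order of the vertices of a small complete graph by symmetry'' to avoid crossings in the shared-endpoint case, whereas you give an explicit intra-block ordering and a clean three-case check; your version is therefore more precise but not a different method.
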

\begin{proof}
(The lower bound)
 Let $m=\Delta(G)$, by the definition of complete expansion graph, $K_{m}$ is the subgraph of $\mathcal{E}_{c}(G)$. By Lemma 2.3, we get that $pn(\mathcal{E}_{c}(G))\geq pn(K_{m})=\lceil\frac{m}{2}\rceil=\lceil\frac{\Delta(G)}{2}\rceil$.

(The upper bound)
  Let $pn(G)=t$. Furthermore assume that $L: v_{1}, v_{2},\cdots, v_{n}$ is the optimal order of the vertices of $G$ and $P=\{P_{1},P_{2},\cdots,P_{t}\}$ is the $t$-page partition.
 If the edge $[v_{i},v_{i+1}]~(1\leq i\leq n)$ is not in $E(G)$ (up to mod $n$), we add it to any of the $t$ pages. Note that the cycle $v_{1}, v_{2},\cdots, v_{n},v_{1}$ for this $t$-book embedding of $G^{'}$ is a hamiltonian cycle, where $G^{'}$ results from $G$ by adding the missing edges.  There may be four cases for two edges of $G^{'}$ that belong to the same page:

 (i)the two edges are nested but have no common vertex;

 (ii)the two edges are nested and have one common vertex;

 (iii)the two edges are not nested and have no common vertex;

 (iv)the two edges are not nested but have a common vertex.

 By the definition of the complete expansion graph, we use the ``small" complete graphs $K_{d(v_{i})}$ to replace the original vertices $v_{i}$ on the spine. By the Lemma 2.1, there is only one edge incident with one vertex of $K_{d(v_{i})}$ and one vertex of another``small" complete graph. For case (ii) and (iv), the edges of $G^{'}$ that belong to the same page may cross after expansion transformation. We can exchange the order of the vertices of a ``small" complete graph on the spine by the symmetry of the  complete graph, so that the edges of $G^{'}$ in the same page are still in the same page after expansion transformation(see Figure 2(a-d)).

 \begin{figure}
\centering
\subfigure[]{
\label{Fig.sub.1}
\includegraphics[width=7cm]{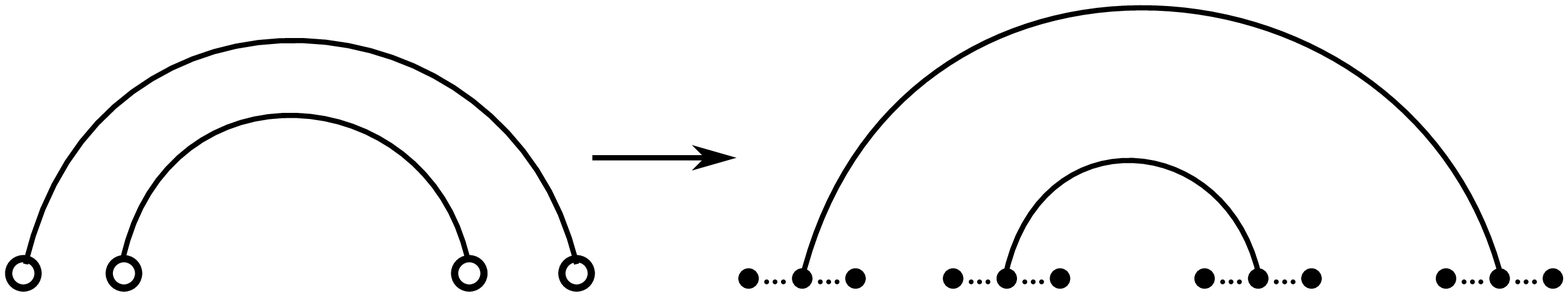}}
\subfigure[]{
\label{Fig.sub.1}
\includegraphics[width=7cm]{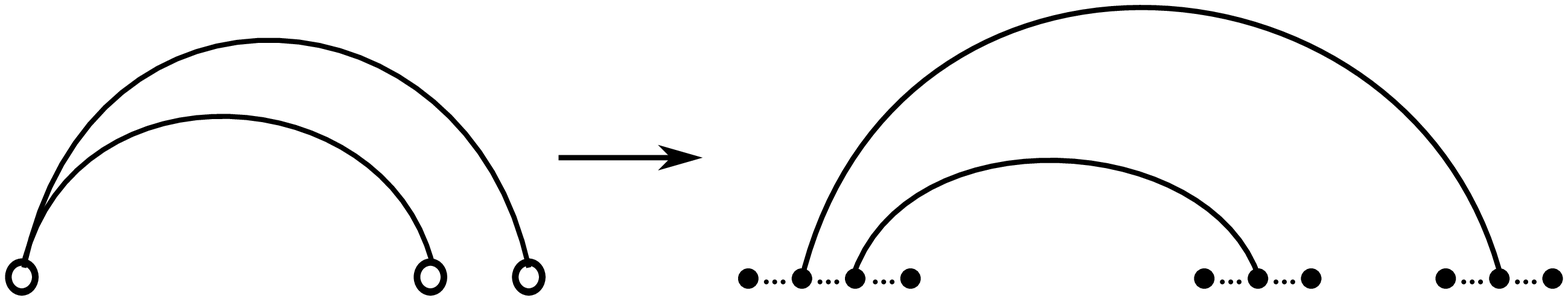}}

\subfigure[]{
\label{Fig.sub.1}
\includegraphics[width=7cm]{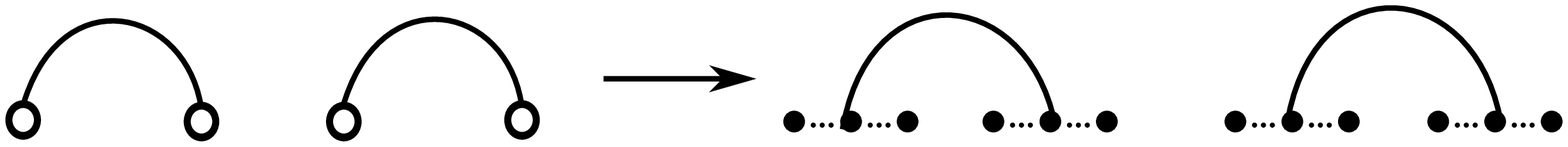}}
\subfigure[]{
\label{Fig.sub.1}
\includegraphics[width=6cm]{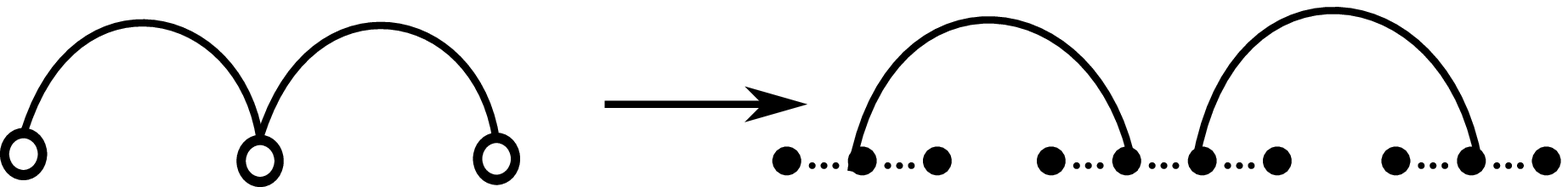}}
\subfigure[]{
\label{Fig.sub.1}
\includegraphics[width=11cm]{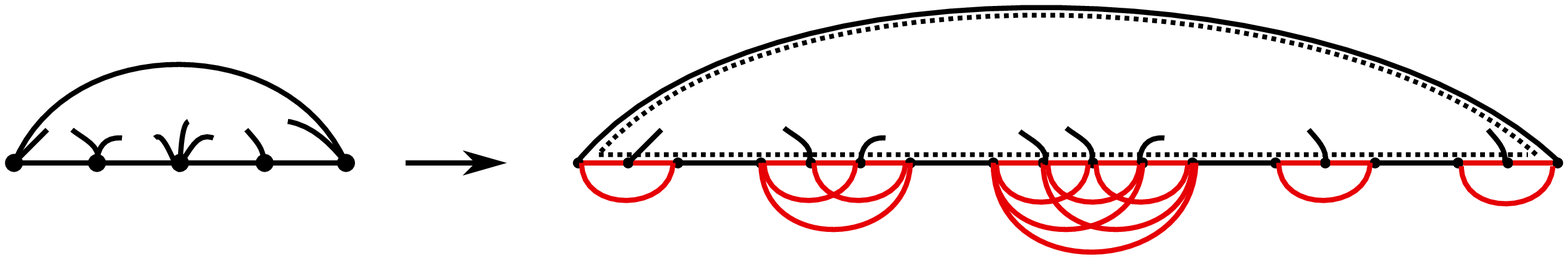}}
\caption{(a)The two edges in case (i). (b)The two edges in case (ii). (c)The two edges in case (iii). (d)The two edges in case (iv). (e)The complete expansion graph of $G^{'}$, the dotted curve shows the Jordan curve.
}.
\label{Fig.lable}
\end{figure}

 We get a Jordan curve $C$ in the plane(see Figure 2(e)). $C$ passes through each vertex of $\mathcal{E}_{c}(G)$
and partitions the plane into interior and exterior regions. The edges inside the curve can be divided into $t$ pages. The edges outside the curve which belong to these ``small" complete graphs $K_{d_{i}}$ can be divided into $\lceil\frac{m}{2}\rceil$ pages, where $m=\Delta(G)$ and $d_{i}\leq m$.

 In summary, $\lceil\frac{\Delta(G)}{2}\rceil\leq pn(\mathcal{E}_{c}(G))\leq pn(G)+\lceil\frac{\Delta(G)}{2}\rceil$.
\end{proof}

For a star graph $S_{m+1}$ with $m+1$ vertices,  whose maximum degree $\Delta(S_{m+1})=m$, we get that following result which shows that the lower bound of $pn(\mathcal{E}_{c}(G))$ given in Theorem 3.2 is tight and can not be improved.

\begin{thm}
For a star graph $S_{m+1}$  with $m+1$ vertices, $pn(\mathcal{E}_{c}(S_{m+1}))=\lceil\frac{m }{2}\rceil$.
\end{thm}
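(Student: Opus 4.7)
The lower bound is immediate from Theorem 3.2: since $\Delta(S_{m+1})=m$, we get $pn(\mathcal{E}_{c}(S_{m+1}))\geq \lceil m/2\rceil$. All the work therefore lies in producing a matching $\lceil m/2\rceil$-page embedding.

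First I will identify the structure of $\mathcal{E}_{c}(S_{m+1})$. The star has a center $c$ of degree $m$ and $m$ leaves of degree $1$. Under the complete expansion transformation, the center is replaced by a copy of $K_m$ on vertices $u_1,\ldots,u_m$; each leaf is replaced by a single vertex $\ell_i$ (since $K_1$ is just one vertex); and by Lemma 2.1 each original edge $cv_i$ becomes exactly one pendant edge joining some $u_j$ to $\ell_i$, with the $u_j$'s pairwise distinct. Relabeling so that $u_i$ is matched with $\ell_i$, we see that $\mathcal{E}_{c}(S_{m+1})$ is precisely $K_m$ with one pendant leaf hung off each of its vertices.

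To embed this graph in $\lceil m/2\rceil$ pages, I will start from an optimal $\lceil m/2\rceil$-page embedding of $K_m$ on the spine order $u_1,u_2,\ldots,u_m$ (which exists since $pn(K_m)=\lceil m/2\rceil$), and then insert each pendant leaf $\ell_i$ on the spine immediately to the right of its partner $u_i$, yielding the new spine $u_1,\ell_1,u_2,\ell_2,\ldots,u_m,\ell_m$. Because whether two edges among the $u_j$'s cross on a given page depends only on the relative order of the $u_j$'s on the spine, and that order is preserved by the insertion, the original page assignment of the $K_m$-edges remains crossing-free. Finally, assign every pendant edge $u_i\ell_i$ to page $1$.

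The main (and essentially the only) step that requires verification is that the pendant edges introduce no new crossings. This reduces to a routine spine-alternation check: each pendant edge $u_i\ell_i$ joins two consecutive positions on the spine, so any other edge either shares an endpoint with it, is spine-disjoint from it, or strictly nests it; none of those cases produces a crossing, so all $m$ pendant edges sit happily on page $1$. This delivers $pn(\mathcal{E}_{c}(S_{m+1}))\leq \lceil m/2\rceil$, and together with the lower bound from Theorem 3.2 completes the proof.
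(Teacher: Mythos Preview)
Your proof is correct and follows essentially the same approach as the paper: both obtain the lower bound from Theorem~3.2 and the upper bound by recognizing that $\mathcal{E}_{c}(S_{m+1})$ is $K_m$ with a pendant leaf at each vertex, then embedding $K_m$ in $\lceil m/2\rceil$ pages and absorbing the pendant edges without any additional page. Your argument is in fact more explicit than the paper's (which simply asserts that the leaf edges ``always can be embedded into appropriate pages''), since you specify the spine insertion of each $\ell_i$ next to $u_i$ and verify that an edge between consecutive spine positions cannot alternate with any other edge.
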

\begin{proof}
By Theorem 3.2, $pn(\mathcal{E}_{c}(S_{m+1}))\geq \lceil\frac{m}{2}\rceil$. For a star graph $S_{m+1}$, after complete expansion transformation, it is easy to see that the book embedding of $\mathcal{E}_{c}(S_{m+1})$ are mainly that of the complete graph $K_m$. The remained edges are all edges incident with the leaves of $\mathcal{E}_{c}(S_{m+1})$  which always can be embedded into appropriate pages (See an example for $m=8$ in Figure 3). So we have $pn(\mathcal{E}_{c}(S_{m+1}))\leq \lceil\frac{m}{2}\rceil$. Therefore, the result is established.

\begin{figure}
\centering
\subfigure[]{
\label{Fig.sub.1}
\includegraphics[width=4cm]{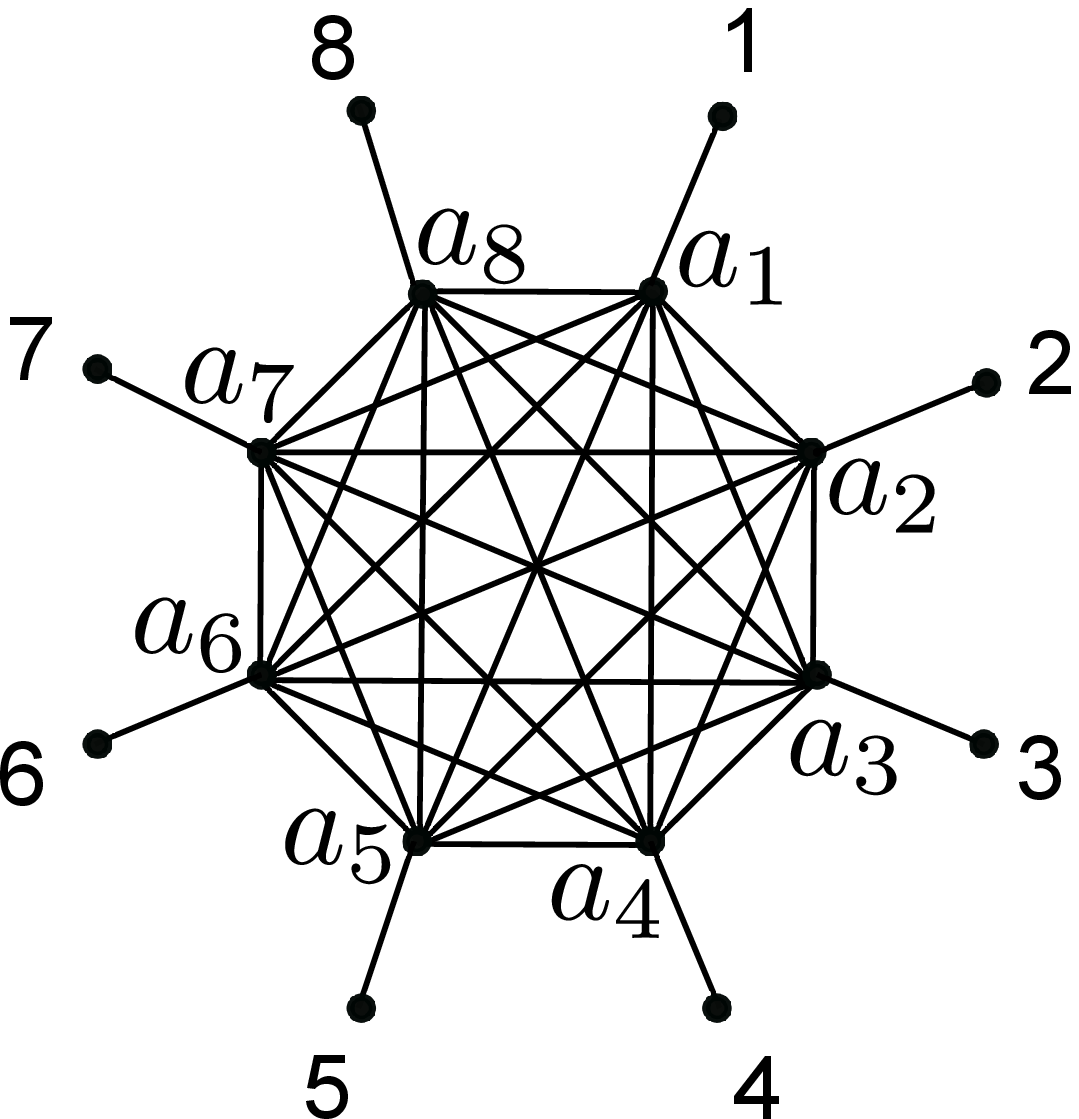}}
\subfigure[]{
\label{Fig.sub.1}
\includegraphics[width=7cm]{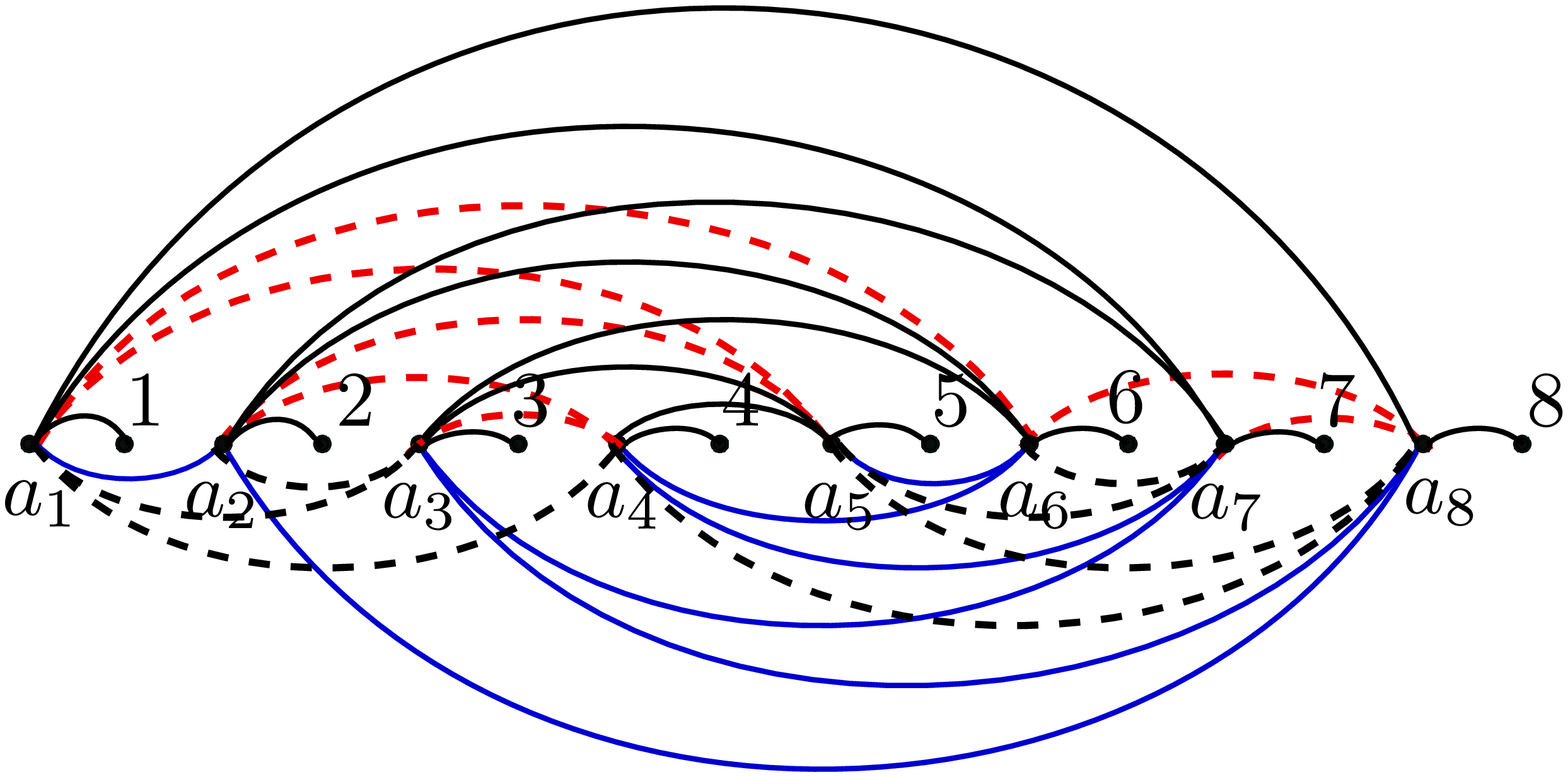}}
\caption{(a)$\mathcal{E}_{c}(S_{9})$. (b)4-page embedding of $\mathcal{E}_{c}(S_{9})$.
}.
\label{Fig.lable}
\end{figure}

\end{proof}


Before  investigating the pagenumber of the complete expansion graph of a tree,  let us recall the following two algorithms,  breadth-first search and  depth-first search$^{[16]}$,  which are involved in Theorem 3.4.

1. Breadth-first search (BFS): It starts at the tree root(or some arbitrary vertex of a graph, sometimes referred to as a 'search key') and explores all of the neighbor vertices at the present depth prior to moving on to the vertices at the next depth level. The label of a BFS-tree
is shown as Figure 4(a).

2. Depth-first search (DFS): 
One starts at the root (selecting some arbitrary vertex as the root in the case of a graph) and explores as far as possible along each branch
before backtracking. An example of the label of a BFS-tree is shown in Figure 4(b).
\begin{figure}
\centering
\subfigure[]{
\label{Fig.sub.1}
\includegraphics[width=4.5cm]{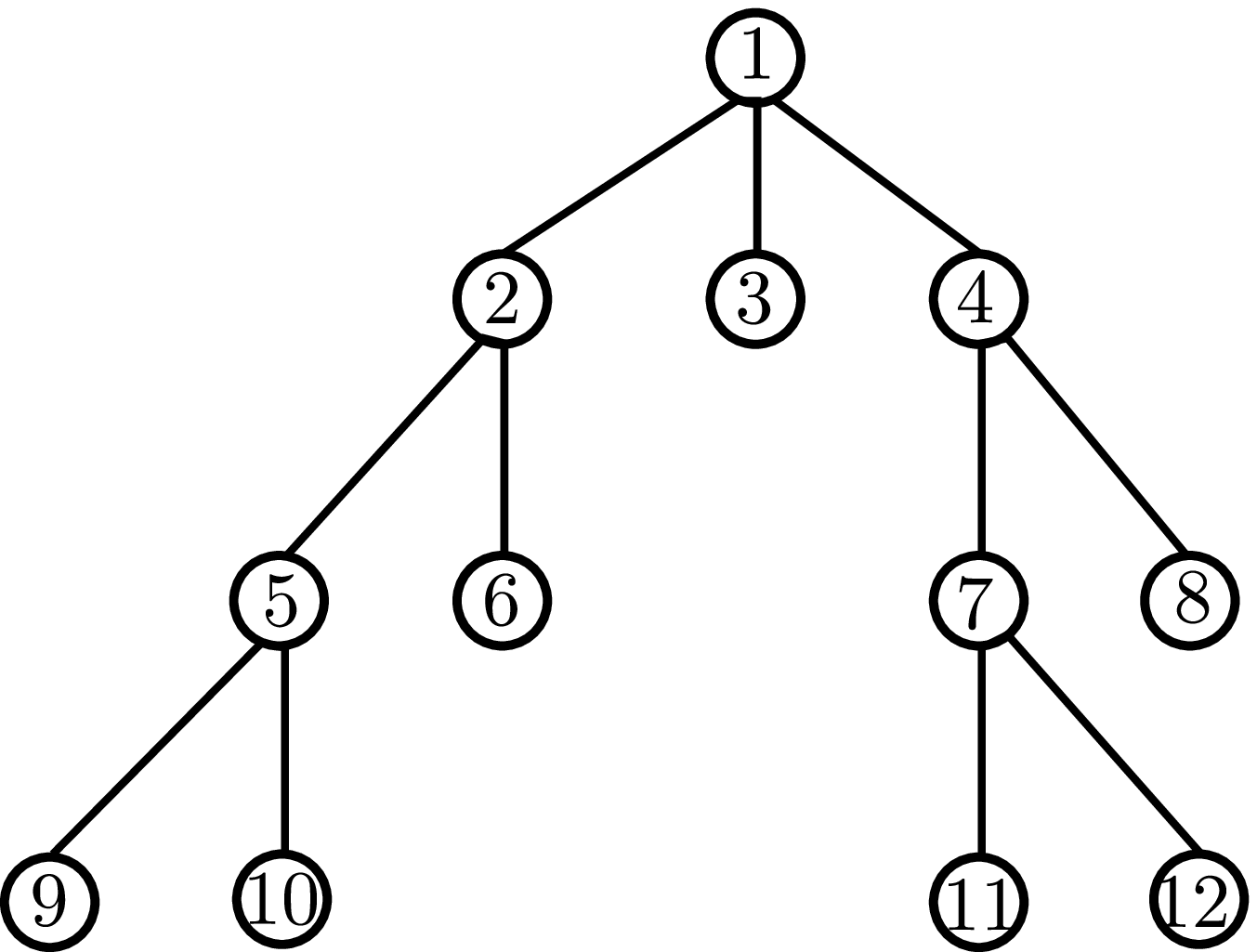}}
\subfigure[]{
\label{Fig.sub.1}
\includegraphics[width=4.5cm]{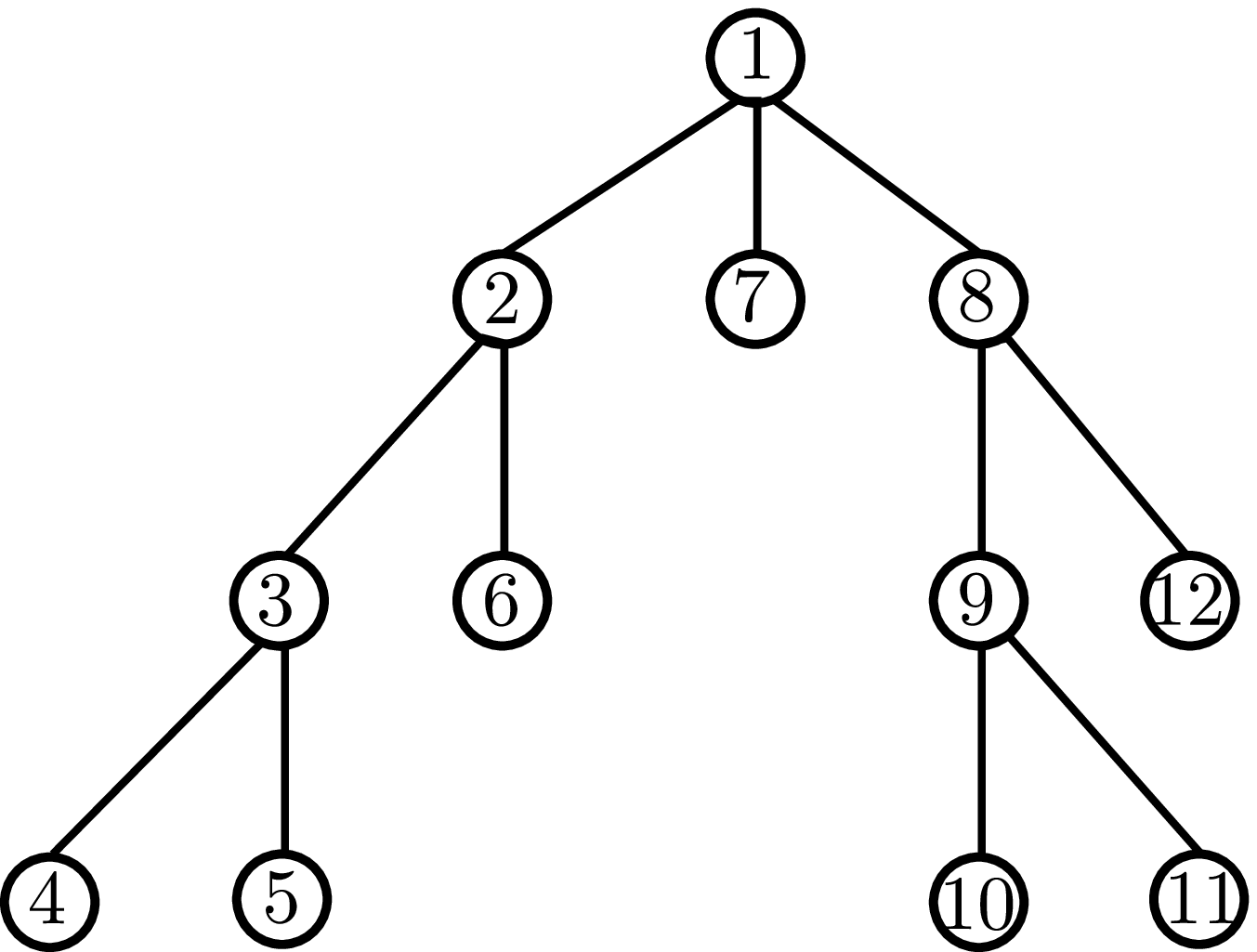}}
\caption{(a)The label of a breadth-first search tree. (b)The label of a depth-first search tree.
}.
\label{Fig.lable}
\end{figure}

\begin{thm}
For a tree $T$, $pn(\mathcal{E}_{c}(T))=\lceil\frac{\Delta(T)}{2}\rceil$.
\end{thm}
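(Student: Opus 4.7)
The lower bound $pn(\mathcal{E}_c(T)) \geq \lceil \Delta(T)/2 \rceil$ is immediate from Theorem 3.2, so my task will be to produce a book embedding of $\mathcal{E}_c(T)$ using exactly $\lceil \Delta(T)/2 \rceil$ pages.

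The plan is to construct the spine order recursively, by a depth-first interleaving of the complete-graph blocks $K_{d(v)}$ with the subtrees of their children. I would root $T$ at an arbitrary vertex and, for each vertex $v$ with parent $p(v)$ and children $c_1,\ldots,c_k$, define the spine segment $\sigma(v)$ inductively as the list $(v,p(v)v),\;(v,vc_1),\;\sigma(c_1),\;(v,vc_2),\;\sigma(c_2),\;\ldots,\;(v,vc_k),\;\sigma(c_k)$, omitting the first entry when $v$ is the root. The key property of the resulting linear ordering of $V(\mathcal{E}_c(T))$ is that for every edge $e=vw$ of $T$, the two vertices $(v,e)$ and $(w,e)$ of $\mathcal{E}_c(T)$ land at consecutive spine positions: when $w$ is a child of $v$, the entry $(v,e)$ is immediately followed by the first entry $(w,e)$ of $\sigma(w)$.

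I would then verify two non-crossing properties that make the page assignment essentially automatic. First, since every tree edge of $\mathcal{E}_c(T)$ occupies two consecutive spine positions $\{p,p+1\}$, it cannot alternate with any internal edge $\{a,b\}$ of any $K_{d(v)}$-block: if $a<p$ and $b>p+1$ the tree edge is strictly nested inside $\{a,b\}$, and all other configurations either share an endpoint or are disjoint. Second, because each $\sigma(c_i)$ occupies a contiguous segment of the spine lying strictly between two consecutive $K_{d(v)}$-positions, every internal edge of a descendant block is either strictly nested in, or disjoint from, any internal edge of an ancestor block. Consequently, internal edges of distinct blocks never cross one another, and no tree edge of $\mathcal{E}_c(T)$ crosses any internal edge.

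With these two observations in hand, the page assignment is routine. For each block $K_{d(v)}$ I would apply the canonical $\lceil d(v)/2\rceil$-page embedding of a complete graph on its $d(v)$ spine positions; since internal edges of distinct blocks are pairwise non-crossing, the $p$-th pages of all these embeddings can be merged into a single page of the final book, so all internal edges fit into $\max_v\lceil d(v)/2\rceil=\lceil \Delta(T)/2\rceil$ pages. Every tree edge of $\mathcal{E}_c(T)$ is then added to page $1$ without creating a new crossing. The resulting $\lceil \Delta(T)/2\rceil$-page embedding matches the lower bound. The main obstacle is the case analysis of the previous paragraph; once the nested-and-adjacent structure of the DFS interleaving is established, the remainder of the proof reduces to standard embeddings of complete graphs.
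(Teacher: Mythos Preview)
Your proof is correct and follows essentially the same strategy as the paper: a DFS-style interleaving that places each child block inside a gap of its parent block, so that the $K_{d(v)}$-blocks are pairwise nested or side-by-side and the canonical $\lceil d(v)/2\rceil$-page complete-graph embeddings can be overlaid page-by-page. Your recursive definition of $\sigma(v)$ and the explicit observation that every tree edge of $\mathcal{E}_c(T)$ lands on consecutive spine positions make the argument considerably cleaner than the paper's figure-driven description.
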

\begin{proof}(The lower bound)By Theorem 3.2, $pn(\mathcal{E}_{c}(T))\geq\lceil\frac{\Delta(T)}{2}\rceil$.

(The upper bound)Assume now that $T$ has $n$ vertices and is rooted at vertex $v_{0}$. For any vertex $v$ of $T$ except for $v_{0}$, denote by $p(v)$ the parent of $v$ in $T$. For a vertex $v$ of $T$ which is not a leaf, we define an order for its children: if $v^{'}$ and $v^{''}$ are children of $v$, then $v^{'}<v^{''}$ iff $v^{'}$ precedes $v^{''}$ in the counterclockwise order of the edges around $v$, when starting from $(v,p(v))$. 
We label the vertices of $T$ as they appear in the order of the DFS traversal where $1=v_{0}$ (see Figure 5(a)). After complete expansion transformation, the original vertices $i$ $(1\leq i\leq n)$ are replaced  by ``small" complete graphs with $d(i)$ vertices. It is easy to find that there is a vertex in the ``small" complete graph connects its parent complete graph in $\mathcal{E}_{c}(T)$ (see Figure 5(b)).
We start at this vertex and label the vertices  of the ``small" complete graph in the counterclockwise order by $ij$, where $1\leq j\leq d(i)$. We draw the ``small" complete graphs in the order implied by the Breadth First Search traversal of the complete expansion of tree. The first ``small" complete graph is drawn as in the case of complete graph. Each next ``small" complete graph is plugged into the drawing, as shown in Figure 5(c).
\begin{figure}
\centering
\subfigure[]{
\label{Fig.sub.1}
\includegraphics[width=5cm]{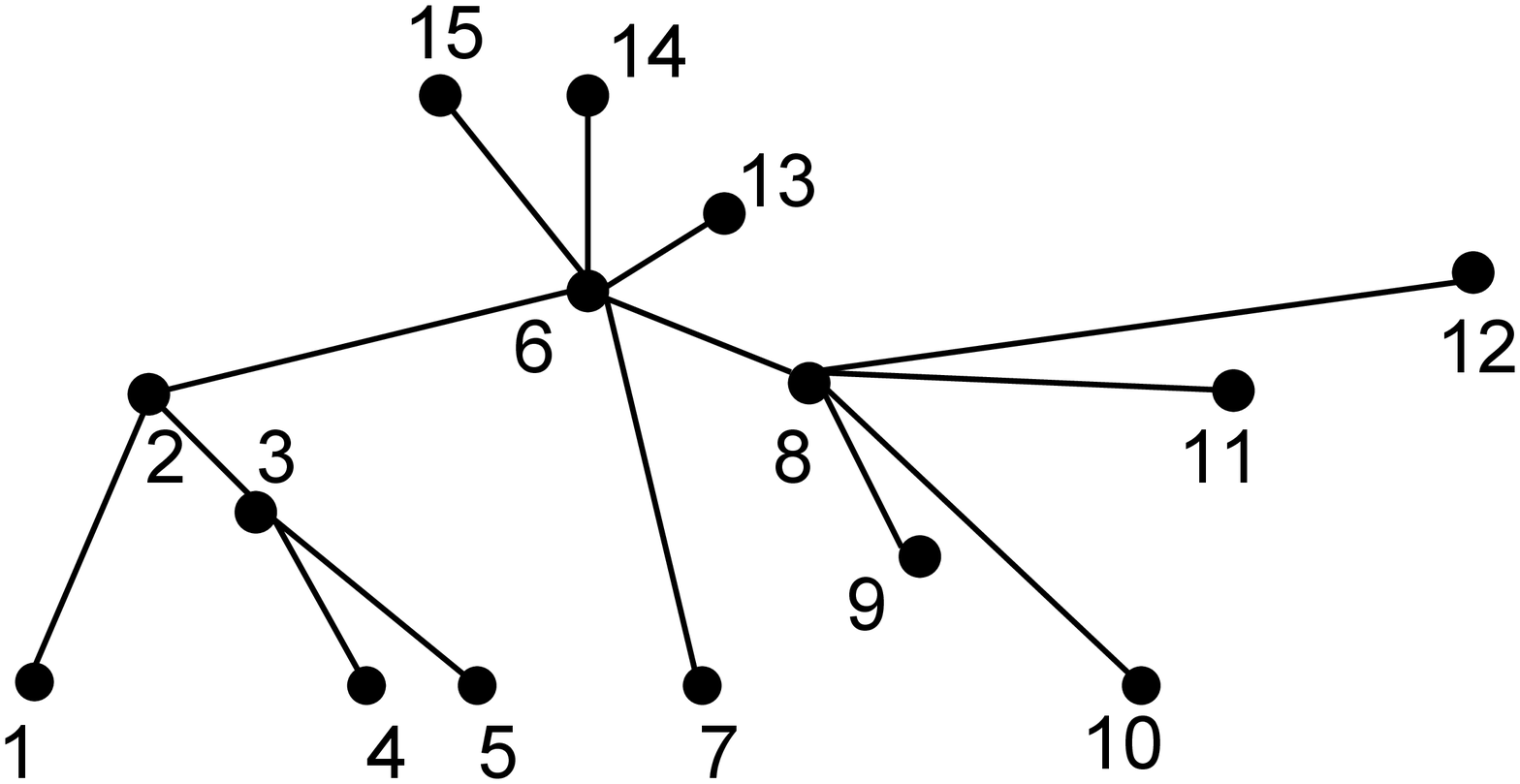}}
\subfigure[]{
\label{Fig.sub.1}
\includegraphics[width=7cm]{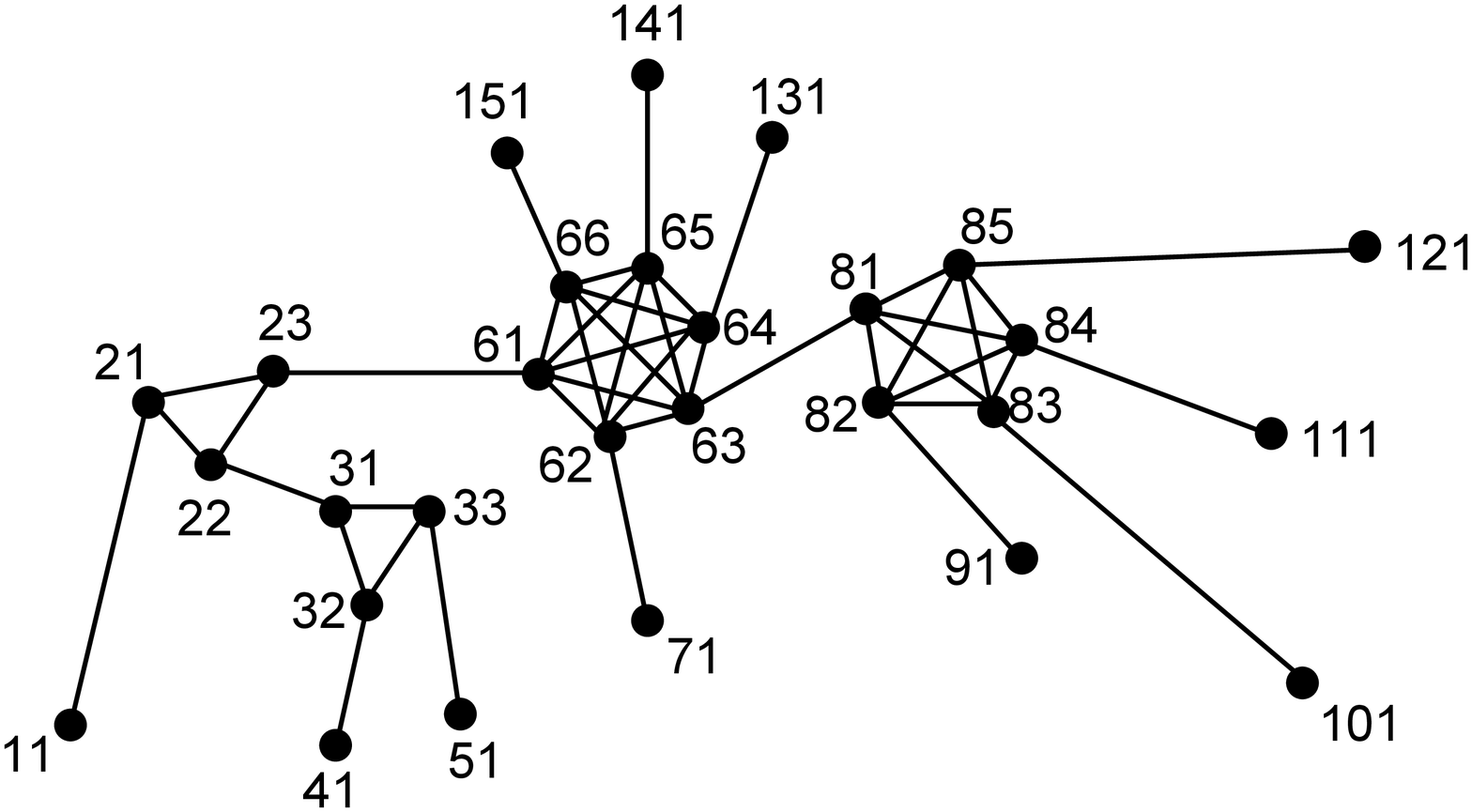}}
\subfigure[]{
\label{Fig.sub.1}
\includegraphics[width=12cm]{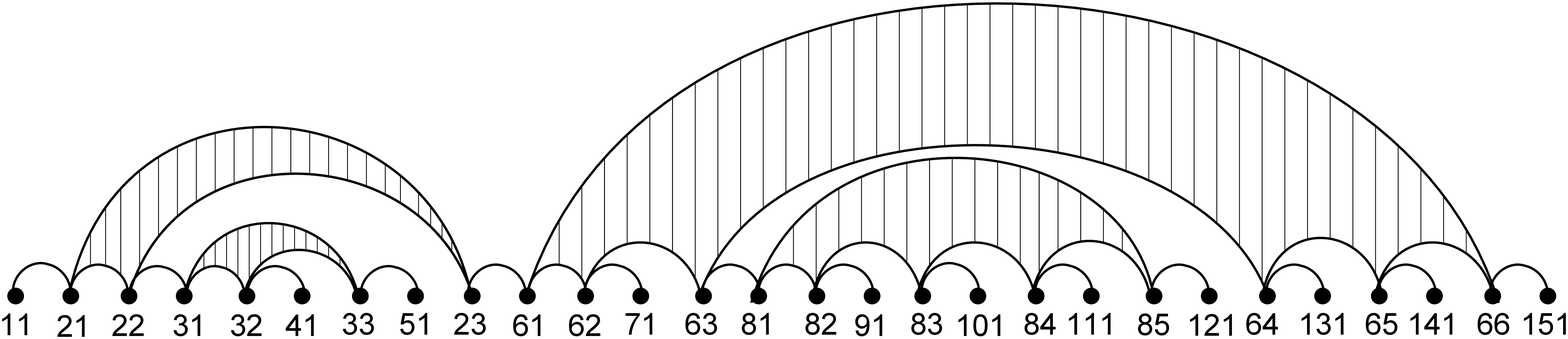}}
\caption{(a)A label of tree $T$. (b)A label of the complete expansion graph $\mathcal{E}_{c}(T)$ of tree $T$. (c)The position of the vertices of child complete graph and the parent complete graph. The shaded area indicates the edges of the ``small" complete graph.}
\label{Fig.lable}
\end{figure}

Every ``small" complete graph has a cycle $i1,i2,\cdots,id(i),i1$. The edges inside the cycle can be embedded in  $\lceil\frac{d(i)}{2}\rceil$ pages, and the edges on the cycle can be embedded in some appropriate pages of these $\lceil\frac{d(i)}{2}\rceil$ pages. The edges belonging to different complete graphs are nested with each other or side by side, so we only need to consider the pagenumber of the largest complete graph with $\Delta(T)$ vertices. Then $pn(\mathcal{E}_{c}(T))\leq\lceil\frac{\Delta(T)}{2}\rceil$.

Thus we arrive at the conclusion that $pn(\mathcal{E}_{c}(T))=\lceil\frac{\Delta(T)}{2}\rceil$.
\end{proof}

In [4], the pagenumber of M$\ddot{o}$bius Ladder $M_{h}$ and the Petersen graph $P$ has been showed that $pn(M_{h})=3$ and $pn(P)=3$. With the similar method, we get the pagenumber of the complete expansion graph of $M_{h}$ and $P$ as following:
 \begin{figure}
\centering
\subfigure[]{
\label{Fig.sub.1}
\includegraphics[width=6cm]{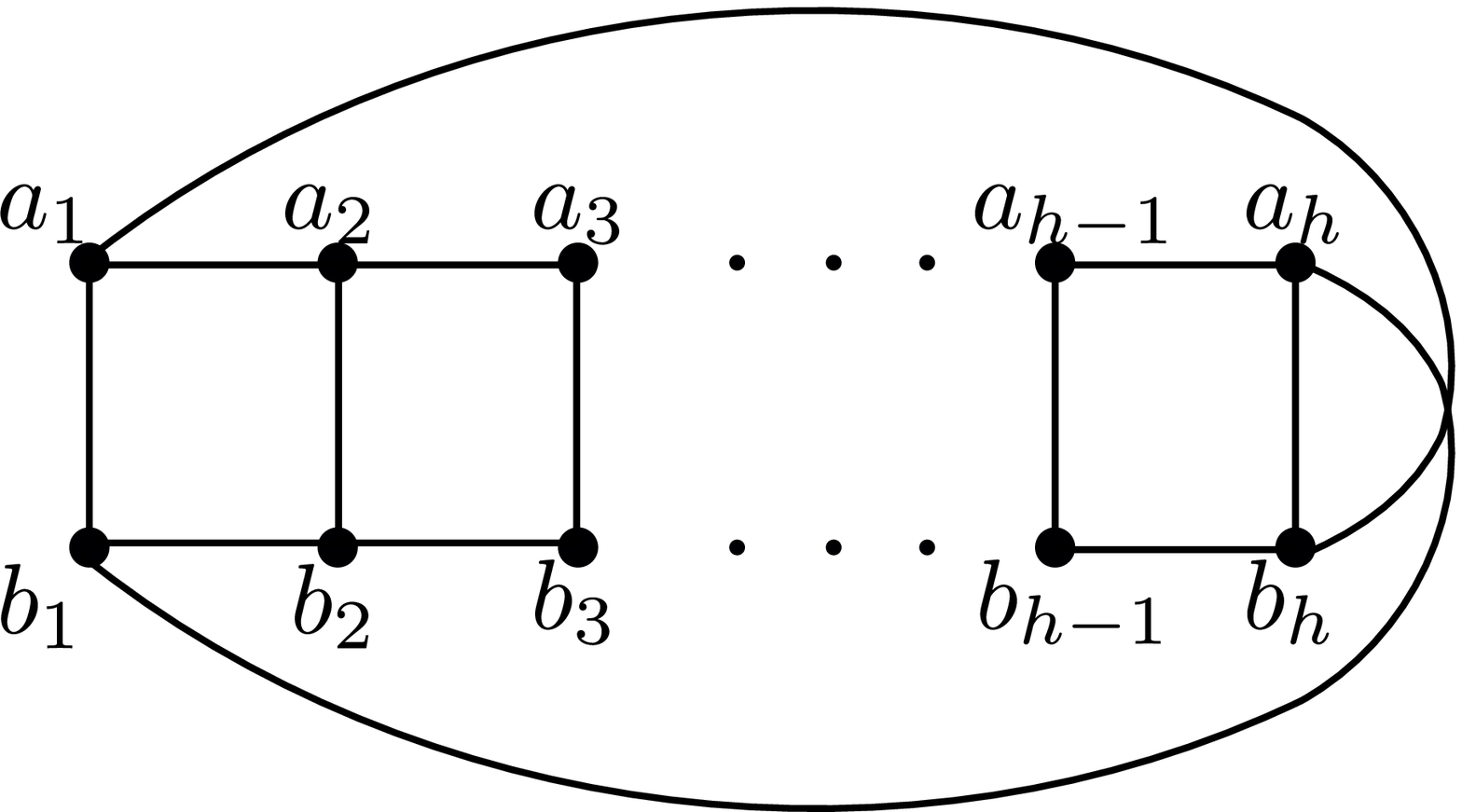}}
\subfigure[]{
\label{Fig.sub.1}
\includegraphics[width=7cm]{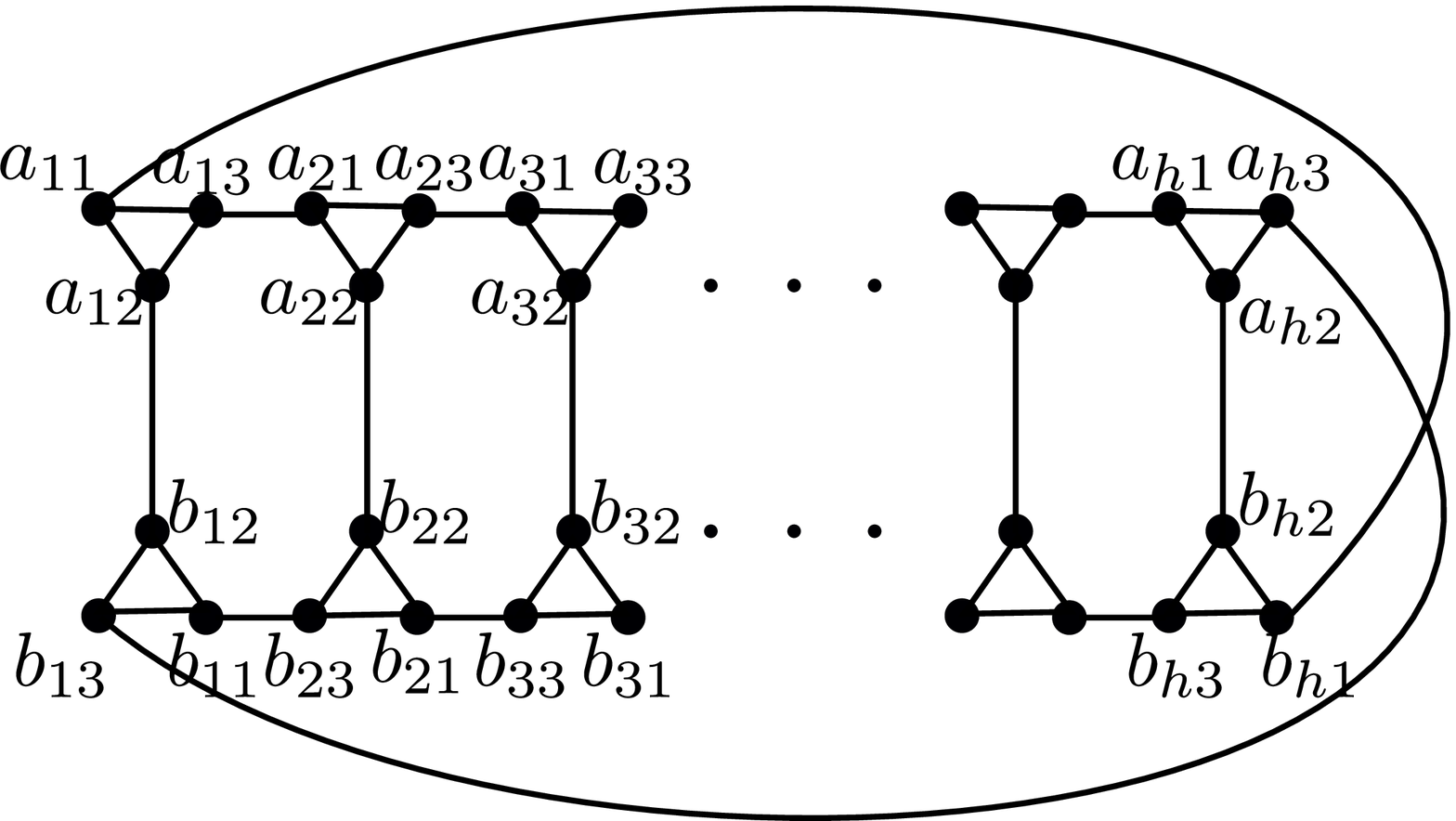}}
\subfigure[]{
\label{Fig.sub.1}
\includegraphics[width=14cm]{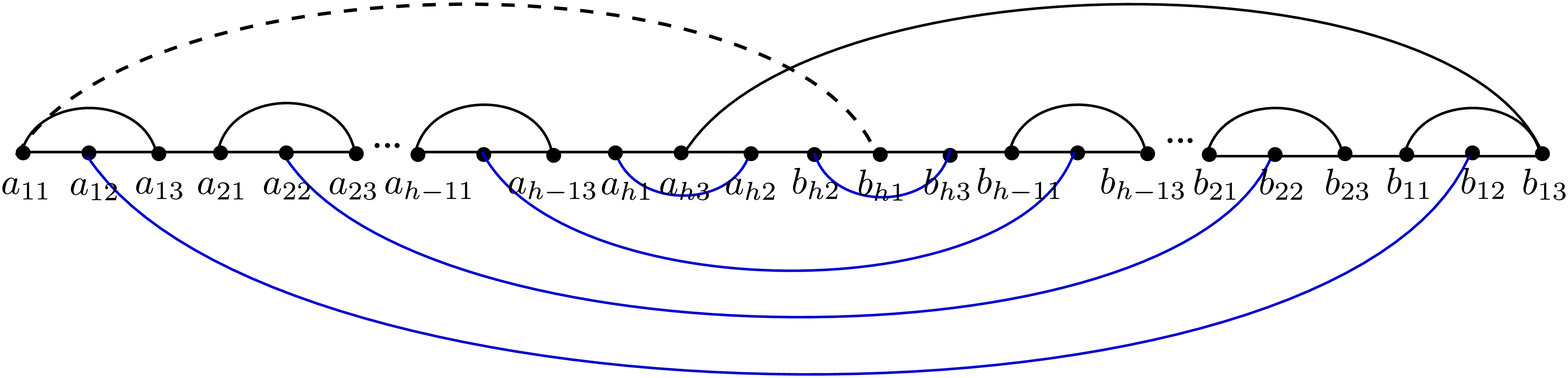}}
\caption{(a)$M\ddot{o}bius$ Ladder $M_{h}$. (b)The complete expansion graph of $M_{h}$. (c)The book embedding of $\mathcal{E}_{c}(M_{h})$.
}.
\label{Fig.lable}
\end{figure}

\begin{thm}
For $M\ddot{o}bius$ Ladder $M_{h}$,  $pn(\mathcal{E}_{c}(M_{h}))=3$.
\end{thm}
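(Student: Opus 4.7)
The plan is to establish the equality $pn(\mathcal{E}_{c}(M_{h}))=3$ by proving matching upper and lower bounds. Since $M_{h}$ is $3$-regular, Theorem~3.2 only yields $pn(\mathcal{E}_{c}(M_{h}))\geq\lceil 3/2\rceil=2$ and $pn(\mathcal{E}_{c}(M_{h}))\leq pn(M_{h})+\lceil 3/2\rceil=5$, so neither direction is immediate. The analogy suggested in the paper to the proofs of $pn(M_{h})=3$ and $pn(P)=3$ in [4] indicates that the lower bound should come from a topological obstruction and the upper bound from an explicit three-page layout.

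For the lower bound I would invoke Lemma~2.4(iii): it suffices to show $\mathcal{E}_{c}(M_{h})$ is non-planar. The key observation is that contracting each of the $2h$ small complete graphs $K_{3}=f(v_{i})$ back to a single vertex recovers $M_{h}$ itself, since by Lemma~2.1 every original edge of $M_{h}$ corresponds to exactly one cross-cluster edge in $\mathcal{E}_{c}(M_{h})$. Hence $M_{h}$ is a minor of $\mathcal{E}_{c}(M_{h})$. Because $M_{h}$ is well-known to be non-planar (for $h=3$ it is isomorphic to $K_{3,3}$, and for larger $h$ it contains a $K_{3,3}$-subdivision using three rungs together with the outer cycle), its expansion cannot be planar either, so $pn(\mathcal{E}_{c}(M_{h}))\geq 3$.

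For the upper bound I would construct an explicit three-page book embedding. The natural starting point is the three-page embedding of $M_{h}$ itself: place the $2h$ cycle vertices $v_{0},v_{1},\ldots,v_{2h-1}$ in order on the spine, use two pages to accommodate the outer cycle edges (which can be nested appropriately) and a third page for the $h$ rungs $v_{i}v_{i+h}$. Applying the expansion, each $v_{i}$ is replaced by a block of three spine vertices, which I would orient (using the symmetry of $K_{3}$ exploited in the proof of Theorem~3.2) so that the unique cross-cluster endpoint of each of the three incident edges is positioned at the correct end of the block to preserve non-crossing. Within each $K_{3}$ block, the three intra-cluster edges can be distributed among the three pages since $pn(K_{3})=2$ leaves slack. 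I would then verify page-by-page that (i) the two ``cycle'' pages still carry only non-crossing outer and intra-cluster edges, and (ii) the ``rung'' page still carries only non-crossing images of the diameter edges together with any intra-cluster edges assigned to it.

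The main obstacle will be the bookkeeping in step (ii): since expansion replaces a vertex by a block, an intra-cluster edge of $K_{3}$ may end up with spine endpoints that interleave the endpoints of a diameter edge nested over that block. Avoiding this requires carefully choosing which of the three intra-cluster edges of each $K_{3}$ is sent to the rung page, using the freedom to orient the $K_{3}$ block so that the endpoint of the diameter edge sits on the outside of the block. I would present the resulting layout in Figure~6(c) and check the remaining crossings by a finite case analysis on the relative position of two rungs and the blocks they enter, concluding $pn(\mathcal{E}_{c}(M_{h}))\leq 3$ and hence equality.
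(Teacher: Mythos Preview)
Your lower bound matches the paper's: contracting each triangle $f(v)$ in $\mathcal{E}_{c}(M_{h})$ recovers $M_{h}$, which contains $K_{3,3}$ as a minor (indeed $M_{3}\cong K_{3,3}$), so $\mathcal{E}_{c}(M_{h})$ is non-planar and Lemma~2.4(iii) gives $pn(\mathcal{E}_{c}(M_{h}))\geq 3$.

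The upper bound, however, has a genuine gap in the base layout. With the cyclic spine order $v_{0},v_{1},\ldots,v_{2h-1}$ that you specify, the $h$ rungs $v_{i}v_{i+h}$ cannot all go on a single page: for any $0\leq i<j\leq h-1$ the four endpoints occur along the spine in the order $v_{i},v_{j},v_{i+h},v_{j+h}$, so every two rungs alternate and hence cross. In that order the rungs alone would require $h$ pages, and the three-page embedding of $M_{h}$ you take as your starting point simply does not exist. (Conversely, the outer-cycle edges are consecutive in that order and need only one page, not two, which is a further sign that the picture is off.)

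The paper, following the layout in~[4], instead places the original vertices in the ``ladder'' order $a_{1},\ldots,a_{h},b_{h},\ldots,b_{1}$. Under this order the rungs $a_{i}b_{i}$ form a nested family and do fit on one page; it is the two twist edges of the long cycle that cross one another and must be separated onto different pages. After expanding each vertex to a $K_{3}$-block in this corrected order, the paper exhibits an explicit three-page partition: one page carries a single cross-cluster edge, a second carries one cross-cluster edge together with the block diagonals $a_{i1}a_{i3}$ and $b_{i1}b_{i3}$, and the third carries everything else. Your idea of reorienting each $K_{3}$ so that the cross-cluster endpoints sit on the appropriate side of the block is precisely what makes that third page crossing-free, so once you replace the cyclic order by the ladder order the remainder of your outline goes through.
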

\begin{proof}
(The lower bound) When $h=3$, $M_{h}=K_{3,3}$. For $h\geq 3$, $K_{3,3}$ is the  minor of $M_{h}$. $M_{h}$ is the minor of $\mathcal{E}_{c}(M_{h})$. Thus, $\mathcal{E}_{c}(M_{h})$ contains $K_{3,3}$ as a minor and $\mathcal{E}_{c}(M_{h})$ is not a planar graph. By Lemma 2.4, $pn(\mathcal{E}_{c}(M_{h}))\geq3$.

(The upper bound)We can obtain a printing cycle by placing vertices as Figure 6 and construct a three-page partition for edges as:

$P_{1}=\{a_{11}b_{h1}\}$

$P_{2}=\{b_{13}a_{h3}, a_{i1}a_{i3},  b_{i1}b_{i3}| i=1,2,\cdots,h-1\}$

$P_{3}=\{The~other~edges\}$

The proof of the theorem is now completed.

 \begin{figure}
\centering
\subfigure[]{
\label{Fig.sub.1}
\includegraphics[width=4.5cm]{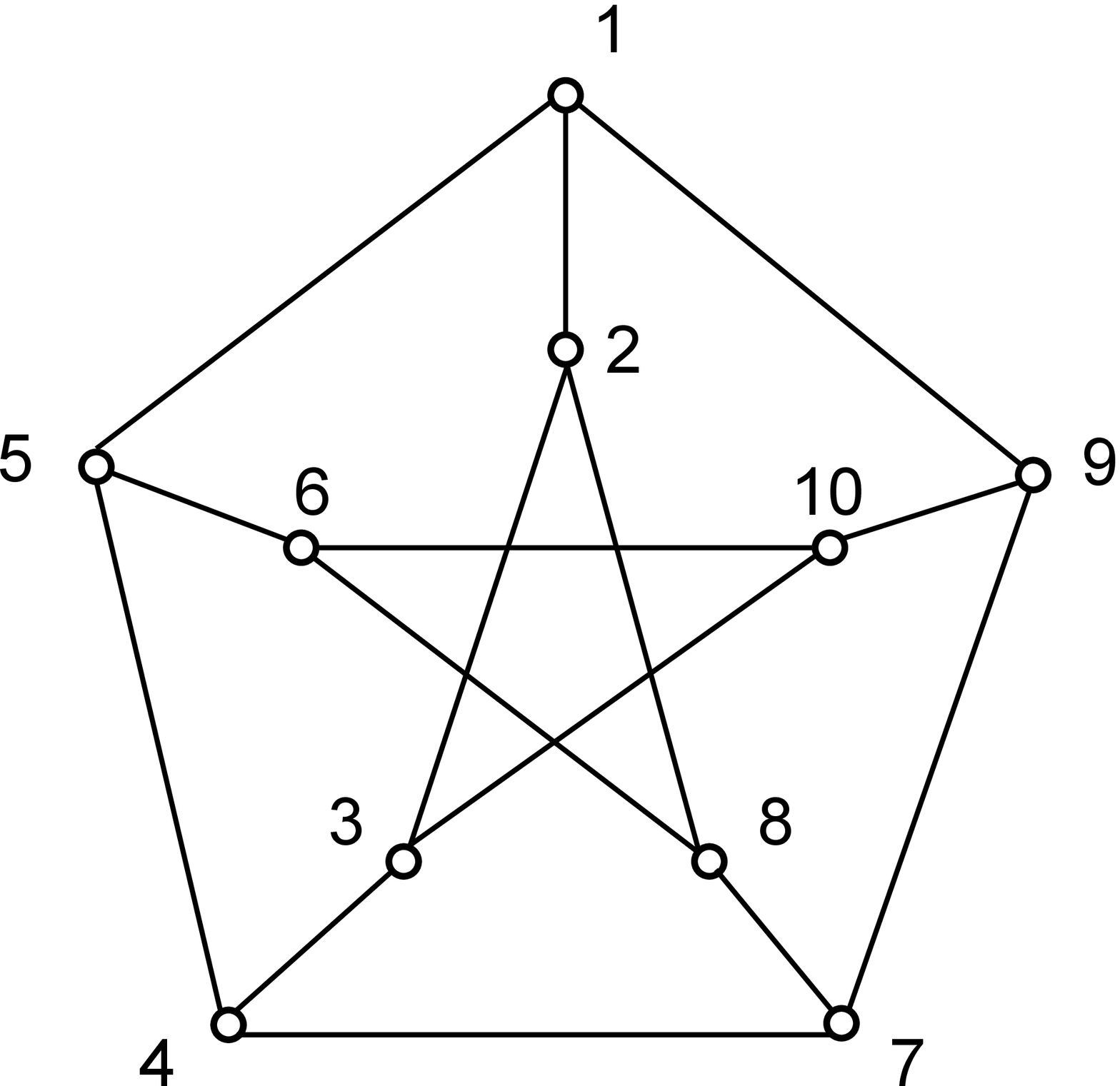}}
\subfigure[]{
\label{Fig.sub.1}
\includegraphics[width=4.5cm]{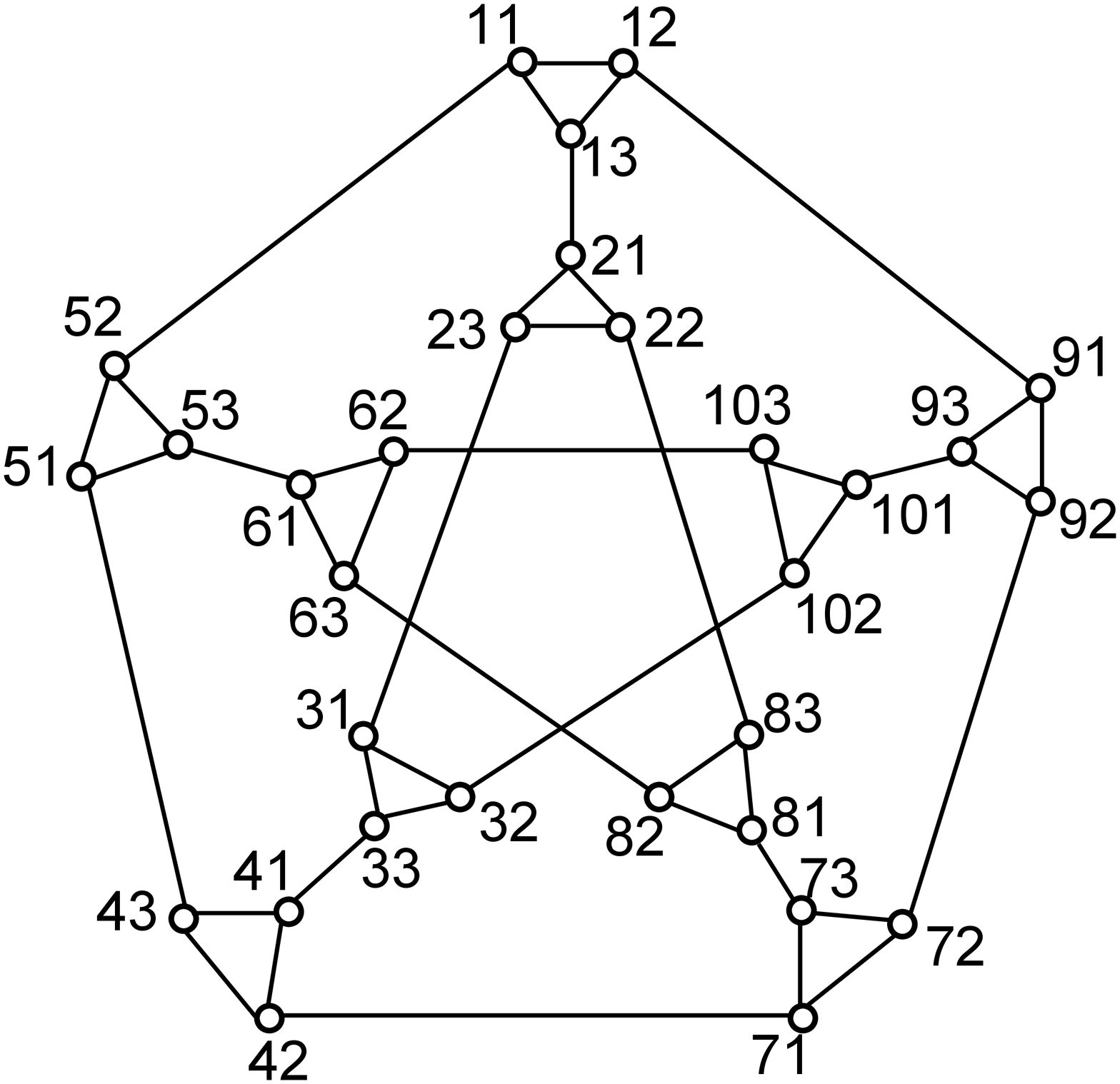}}
\subfigure[]{
\label{Fig.sub.1}
\includegraphics[width=13cm]{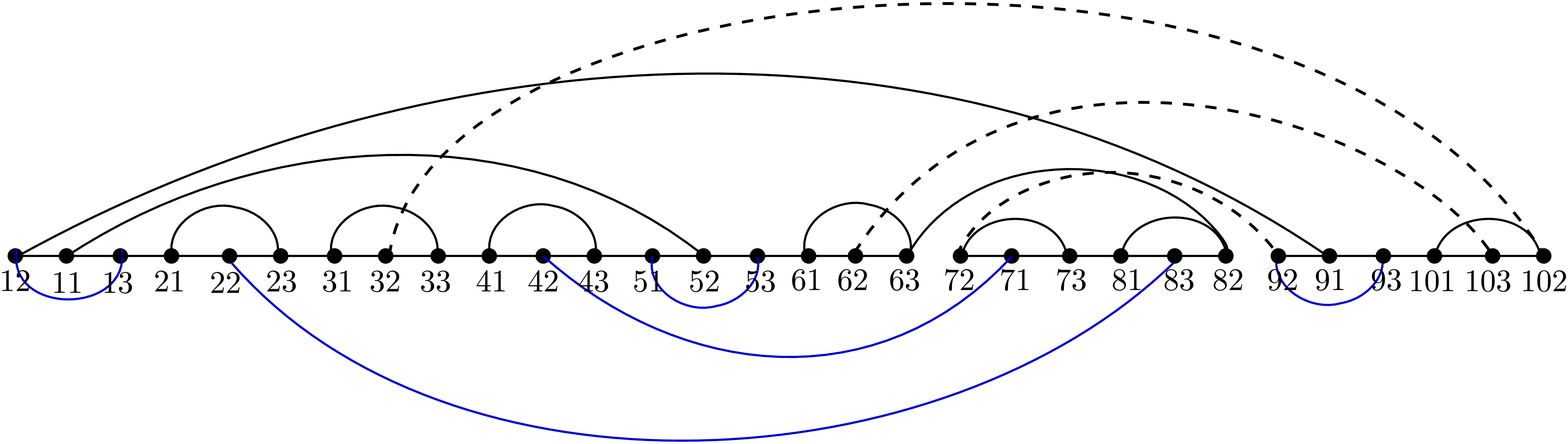}}
\caption{(a)Petersen graph $P$. (b)The complete expansion graph of $P$. (c)The book embedding of $\mathcal{E}_{c}(P)$.
}.
\label{Fig.lable}
\end{figure}

\end{proof}

\begin{thm}
For the Petersen graph $P$, $pn(\mathcal{E}_{c}(P))=3$.
\end{thm}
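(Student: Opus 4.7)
The proof will mirror the structure of Theorem 3.5, splitting into a lower bound and an explicit construction for the upper bound.

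For the lower bound, the plan is to invoke Lemma 2.4(iii). Since the Petersen graph $P$ is not planar (for instance, it contains $K_{3,3}$ as a minor), and $P$ is itself a minor of $\mathcal{E}_{c}(P)$ obtained by contracting each of the ten ``small'' complete graphs $K_{3}$ to a single vertex, $\mathcal{E}_{c}(P)$ contains $K_{3,3}$ as a minor and is therefore non-planar. Consequently $pn(\mathcal{E}_{c}(P))\geq 3$. This step is short and parallels the first paragraph of the proof of Theorem 3.5.

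For the upper bound, the plan is to exhibit a printing cycle on the $30$ vertices of $\mathcal{E}_{c}(P)$ together with a partition of its $45$ edges into three pages; the authors indicate such a layout in Figure 7(c), so the task is to describe it and verify non-crossing on each page. Concretely, I would label the ten triangles of $\mathcal{E}_{c}(P)$ according to the usual ``outer 5-cycle + inner 5-cycle'' structure of $P$, place the three vertices of each triangle consecutively along the spine in an order chosen so that the triangle's spine-edges lie naturally on one side of the ordering (using the symmetry argument of Theorem 3.2 to rotate the local order of each triangle freely), and then group the edges into the following three pages: page $P_{1}$ holds one ``long'' matching edge that closes the cycle, page $P_{2}$ holds the bulk of the inter-triangle connecting edges together with the triangle-edges that can be nested under them, and page $P_{3}$ holds the remaining triangle-edges. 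Verification on each page reduces to checking that no two assigned edges have alternating endpoints along the spine.

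The main obstacle will be the upper-bound construction: unlike trees or the star case, the Petersen graph has girth $5$ and high symmetry but no Hamilton cycle, so there is no obvious canonical spine order, and one must balance the fifteen inter-triangle edges (which behave like the edges of $P$ itself under a linear layout) against the thirty intra-triangle edges so that both fit in only three pages. I would handle this by first fixing a specific $3$-page embedding of $P$ (such as the one witnessing $pn(P)=3$ from [4]), then ``blowing up'' each vertex into a short block on the spine and assigning the three edges of each triangle to the same three pages already used for the incident edges of $P$, exploiting case analysis (i)--(iv) from the proof of Theorem 3.2 and the freedom to reflect each triangle's internal order to avoid the crossings of types (ii) and (iv). Combining the two bounds then yields $pn(\mathcal{E}_{c}(P))=3$.
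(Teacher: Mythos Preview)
Your plan is correct and matches the paper's proof in structure: lower bound via non-planarity of $\mathcal{E}_{c}(P)$ and Lemma~2.4, upper bound via an explicit $3$-page layout. Two small differences are worth noting. First, the paper invokes a $K_{5}$ minor in $\mathcal{E}_{c}(P)$ directly (pointing to Figure~7(b)) rather than your $K_{3,3}$ minor inherited from $P$; either choice works, and yours is arguably cleaner since it just uses that $P$ is a minor of $\mathcal{E}_{c}(P)$. Second, for the upper bound the paper simply exhibits the drawing (Figure~7(c)) without further explanation, whereas you outline a systematic ``blow-up'' of a known $3$-page embedding of $P$. Your approach is sound for this cubic case: after placing the three vertices of each triangle consecutively and reordering them as in Theorem~3.2 to keep the fifteen inter-triangle edges on their original three pages, the two spine-adjacent edges of each triangle cross nothing, and the one remaining edge $v_{1}v_{3}$ nests only $v_{2}$ and hence conflicts with at most the single inter-triangle edge at $v_{2}$, so it can always be placed on one of the other two pages. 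This gives a more conceptual reason for the upper bound than the paper's bare figure; just be aware that the intermediate page description you sketched (one long edge on $P_{1}$, etc.), copied from the M\"{o}bius-ladder proof, need not match the Petersen layout and should be replaced by the output of your blow-up procedure.
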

\begin{proof}
 The expansion graph of $P$ contains $K_{5}$ as a minor (See Figure 7(b)) and hence $\mathcal{E}_{c}(P)$ is not a planar graph. By Lemma 2.4, $pn(\mathcal{E}_{c}(P))\geq3$. On the other hand, Figure 7(c) shows an three-page book embedding of the complete expansion graph $\mathcal{E}_{c}(P)$. Thus the result is established.

%

\end{proof}


The complete graph $K_{n}$ can be embedded in $\lceil\frac{n}{2}\rceil$ pages. We consider that the pagenumber of complete expansion graph of $K_{n}$ and get the  following result:

\begin{thm}
For $n=2m,m\in Z^{+}$, $pn(\mathcal{E}_{c}(K_{n}))=m$; For $n=2m+1,m\in Z^{+}$, $m\leq pn(\mathcal{E}_{c}(K_{n}))\leq m+1$.
\end{thm}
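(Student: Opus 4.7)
The proof splits into a lower bound that covers both parities and two upper bound constructions. For the lower bound, observe that each vertex $v$ of $K_n$ has degree $n-1$, so $\mathcal{E}_c(K_n)$ contains a copy of $K_{n-1}$ (namely $f(v)$) as a subgraph. Lemma~2.3 together with the well-known value $pn(K_{n-1})=\lceil (n-1)/2\rceil$ then yields $pn(\mathcal{E}_c(K_n))\ge m$ in both the $n=2m$ and the $n=2m+1$ case.

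The heart of the argument is the matching upper bound for $n=2m$. My plan is to construct an explicit $m$-page book embedding. I would place the small complete graphs $K^{(1)},K^{(2)},\dots,K^{(2m)}$ consecutively on the spine, with the order on the indices chosen to realize the standard $m$-page embedding of the outer $K_{2m}$. Inside each $K^{(i)}$, I label the $2m-1$ vertices as $v^{(i)}_j$ for $j\ne i$, where $v^{(i)}_j$ is the endpoint of the unique bridge edge from $K^{(i)}$ to $K^{(j)}$; the order in which these labels appear along the spine will be permuted using the symmetry of $K_{2m-1}$, exactly as in the treatment of cases~(ii) and~(iv) inside the proof of Theorem~3.2, so that every bridge edge can inherit the page of its preimage edge in the outer $K_{2m}$ embedding. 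Each internal edge of $K^{(i)}\cong K_{2m-1}$ is then drawn using the standard $m$-page embedding of $K_{2m-1}$, reusing the same $m$ pages that the bridge edges use. Verifying non-crossing on each page reduces to three sub-cases: internal--internal (controlled by the chosen $K_{2m-1}$ embedding inside each $K^{(i)}$), bridge--bridge (controlled by the outer $K_{2m}$ embedding), and the mixed case. The mixed case is the main obstacle: one must show that the permutation chosen inside each $K^{(i)}$ puts the endpoint of every bridge edge on the correct side of every internal edge sharing its page. This is where the detailed combinatorial bookkeeping concentrates, and it is the step I expect to require the most careful argument.

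For $n=2m+1$ the odd case follows from the even case almost for free. Since $K_{2m+1}$ is a subgraph of $K_{2m+2}$, Theorem~3.1 gives $pn(\mathcal{E}_c(K_{2m+1}))\le pn(\mathcal{E}_c(K_{2m+2}))$, and applying the already-established even-case identity with $m$ replaced by $m+1$ yields $pn(\mathcal{E}_c(K_{2m+1}))\le m+1$. Combined with the lower bound $m$ obtained in the first paragraph, this gives $m\le pn(\mathcal{E}_c(K_{2m+1}))\le m+1$, as claimed.
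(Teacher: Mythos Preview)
Your lower bound matches the paper's: both invoke the $K_{n-1}$ subgraph and Lemma~2.3 (equivalently, the lower bound in Theorem~3.2).

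For the odd case your reduction via Theorem~3.1 to the even case $n=2m+2$ is correct and is genuinely different from what the paper does: the paper writes down a separate explicit $(m{+}1)$-page embedding of $\mathcal{E}_c(K_{2m+1})$ with page-by-page edge lists. Your route is shorter and avoids a second construction; the paper's route has the minor advantage of exhibiting a concrete layout and being independent of the even case.

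For the even case your plan coincides with the paper's strategy: lay the blocks $K^{(1)},\dots,K^{(2m)}$ consecutively, give each bridge edge the page it has in the standard $m$-page layout of the outer $K_{2m}$, and reuse the same $m$ pages for the internal $K_{2m-1}$'s. The paper then carries this out by writing explicit edge lists for the bridge edges and the internal edges on every page (illustrated for $m=3$). What you call ``bookkeeping'' is in fact the whole content of the even-case proof, and your outline stops short of it. Two points worth sharpening before you attempt the verification: first, in the outer $K_{2m}$ zigzag each block meets \emph{two} bridge edges on most pages, so a single ``correct side'' is not enough---both bridge endpoints in block $i$ must land at positions not nested by any page-$k$ internal edge of that block; second, the block-internal vertex order is already constrained by the requirement that same-page bridge edges not cross, so the internal page assignment (which rotation of the $K_{2m-1}$ zigzag goes to which page) must be chosen compatibly with that fixed order. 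The paper's explicit lists resolve both constraints simultaneously; your proposal has not yet shown that such a compatible choice exists in general.
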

\begin{proof}

The complete expansion transformation of $K_{n}$ is to replace the original $n$ vertices in $K_{n}$ by ``small" complete graphs $K_{n-1}$, $|V(\mathcal{E}_{c}(G))|=n(n-1)$.  Its edge set is $E_{1}\cup E_{2}$ , where $E_{1}=\{(v,e)(v,f)|v\in V(K_{n}),e,f\in E(K_{n})\}, E_{2}=\{(v,e)(w,e)|v,w\in V(K_{n}), e\in E(K_{n})\}$. $E_{2}$ corresponds to the edges of the original complete graph $K_{n}$. $E_{1}$ corresponds to the edges of $n$ newly generated complete graph $K_{n-1}$.  $|E(\mathcal{E}_{c}(G))|=|E(K_{n})|+n|E(K_{n-1})|=\frac{n(n-1)^{2}}{2}$.
The vertices in the complete expansion graph are represented by $ij$, where $1\leq i\leq n, 1\leq j\leq n-1$.

Case 1: $n=2m$.

(The lower bound)
By Theorem 3.2, it is easy to see that $pn(\mathcal{E}_{c}(K_{n}))\geq \lceil\frac{n-1}{2}\rceil=\lceil\frac{2m-1}{2}\rceil=m$.

(The upper bound)
When $n=2m$, we construct a $m$-page partition for edges of $E_{2}$ as:

$P_{1}=\{(11,2m(2m-1)),(2m(2m-2),22),(23,(2m-1)(2m-3)),\cdots,(m(2m-1),(m+1)1)\}$;

$P_{2}=\{(33,2m(2m-3)),(2m(2m-4),44),(45,(2m-1)(2m-5)),\cdots,((m+1)(2m-1),(m+2)1)$ and $(32,1(2m-2)),(1(2m-1),21)\}$;

$P_{3}=\{(55,2m(2m-5)),(2m(2m-6),66),(67,(2m-1)(2m-7)),\cdots,((m+2)(2m-1),(m+3)1)$ and $(54,1(2m-4)),(1(2m-3),43),(42,2(2m-2)),(2(2m-1),31)\}$;

$\cdots$

$P_{m-1}=\{((2m-3)(2m-3),2m3),(2m2,(2m-2)(2m-2)),((2m-2)(2m-1),(2m-1)1)$ and $((2m-3)(2m-4),14),(15,(2m-4)(2m-5)),((2m-4)(2m-6),26),\cdots,((m-2)(2m-1),(m-1)1)\}$;

$P_{m}=\{((2m-1)(2m-1),2m1)$ and $((2m-1)(2m-2),12),(13,(2m-2)(2m-3)),((2m-2)(2m-4),24),\cdots,((m-1)(2m-1),m1)\}$.

We can find that the following edge sets can be embedded in the above $m$ pages respectively.

$\{(i1,i(2m-1)),(i(2m-1),i2),(i2,i(2m-2)),\cdots,(i(m-1),i(m+1)),(i(m+1),im)\}$;

$\{(i3,i(2m-1)),(i(2m-1),i4),(i4,i(2m-2)),\cdots,(im,i(m+2)),(i(m+2),i(m+1))\}$ and $\{(i1,i2)\}$;

$\cdots$

$\{(i(2m-3),i(2m-1)),(i(2m-1),i(2m-2))\}$ and $\{(i(2m-4),i1),(i1,i(2m-5)),(i(2m-5),i2),\cdots,(i(m-1),i(m-2))\}$;

$\{(i(2m-2),i1),(i1,(i(2m-3)),\cdots,(i(m-2),im),(im,i(m-1))\}$;

The edges in each page are not intersected(see Figure 8 for case $m=3$).
\begin{figure}[htbp]
\centering
\includegraphics[height=5cm, width=0.8\textwidth]{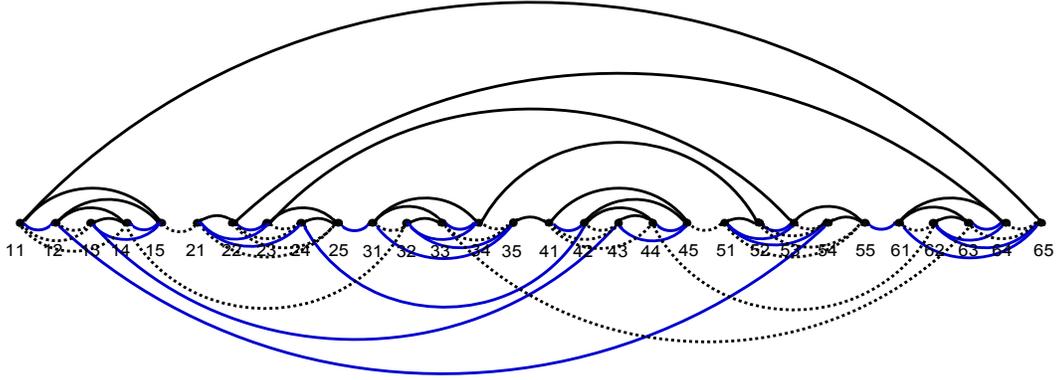}
\caption{The book embedding of $\mathcal{E}_{c}(K_{6})$}
\label{1}
\end{figure}

Case 2: $n=2m+1$.

(The lower bound)
By Theorem 3.2, $pn(\mathcal{E}_{c}(K_{n}))\geq \frac{n-1}{2}=m$.

(The upper bound) We construct a $m+1$-page partition for edges of $E_{2}$ as:

$P_{1}=\{(11,(2m+1)(2m)),((2m+1)(2m-1),22),\cdots,(m(2m-1),(m+2)2),((m+2)1,(m+1)(2m))\}$;

$P_{2}=\{(33,(2m+1)(2m-2)),((2m+1)(2m-3),44),\cdots,((m+1)(2m-1),(m+3)2),((m+3)1,(m+2)(2m))$ and $(21,1(2m))\}$;

$P_{3}=\{(55,(2m+1)(2m-4)),((2m+1)(2m-5),66),(67,(2m)(2m-6)),\cdots,((m+2)(2m-1),(m+4)2),((m+4)1,(m+3)(2m))$ and $(43,1(2m-2)),(1(2m-1),32),(31,2(2m))\}$;

$\cdots$

$P_{m}=\{((2m-1)(2m-1),(2m+1)2),((2m+1)1,(2m)(2m))$ and $((2m-2)(2m-3),14),(15,(2m-3)(2m-4)),((2m-3)(2m-5),26),\cdots,((m-2)(2m-1),m2),(m1,(m-1)(2m))\}$;

$P_{m+1}=\{((2m)(2m-1),12),(13,(2m-1)(2m-2)),((2m-1)(2m-3),24),\cdots,((m-1)(2m-1),(m+1)2),((m+1)1,m(2m))\}$.

The edge sets of $E_{1}$ include:

$\{(i(2m),i1),(i1,i(2m-1)),(i(2m-1),i2),\cdots,(i(m-1),i(m+1)),(i(m+1),im)\}$;

$\{(i(2m-2),i1),(i1,i(2m-3)),\cdots,(i(m-2),im),(im,i(m-1))\}$;

$\{(i(2m-4),i1),(i1,i(2m-5)),\cdots,(i(m-1),i(m-2))\}$ and $\{(i(2m-2),i(2m)),(i(2m),i(2m-1))\}$;

$\cdots$

$\{(i2,i1)\}$ and $\{(i4,i(2m)),(i(2m),i5),\cdots,(i(m+3),i(m+2))\}$;

$\{(i2,i(2m)),(i(2m),i3),\cdots,(i(m+2),i(m+1)\}$;

\begin{figure}[htbp]
\centering
\includegraphics[height=4.5cm, width=0.7\textwidth]{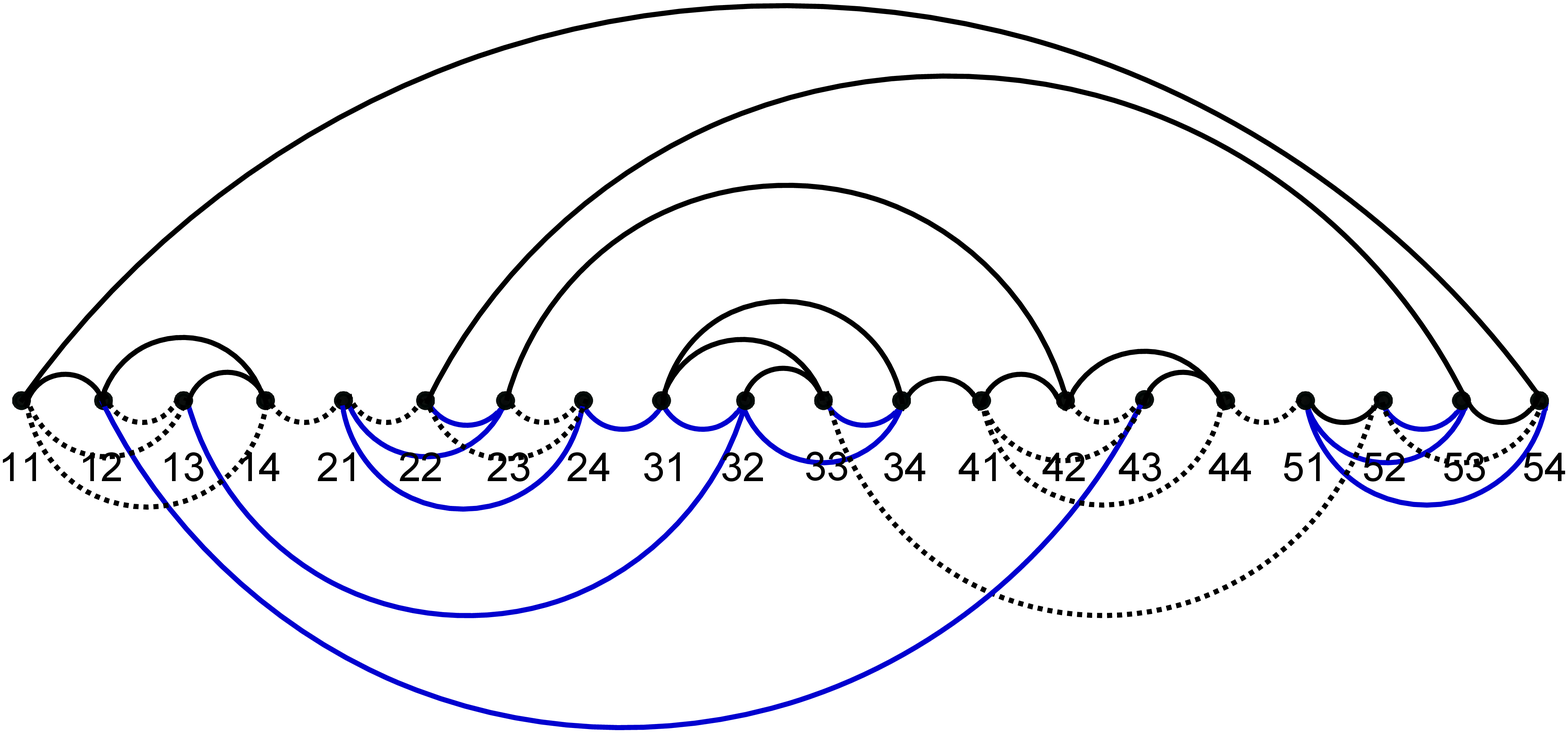}
\caption{The book embedding of $\mathcal{E}_{c}(K_{5})$}
\label{1}
\end{figure}

These edge sets can be embedded in the page partition of $E_{2}$. The edges of each page are not intersected(see Figure 9 for the case $m=2$).

The proof of the theorem is now completed.
\end{proof}

\section{Open problem}
~~~~~In section 3, we have obtained that the difference between the upper and lower bounds of the pagenumber of $\mathcal{E}_{c}(K_{2m+1})$ given by Theorem 3.7 is one.  When $m=2$, by the Theorem 3.7 and the non-planarity of $\mathcal{E}_{c}(K_{5})$, it is easy to get that $pn(\mathcal{E}_{c}(K_{5}))=3$. The natural open problem is whether the lower bound of the pagenumber of $\mathcal{E}_{c}(K_{2m+1})$ can be improved for any $m$.
We conjecture that this inequality in Theorem 3.7 can become equality. In the other words, we propose the following conjecture:

\begin{conj}
 For $n=2m+1,m\in Z^{+}$, $pn(\mathcal{E}_{c}(K_{n}))=\lceil\frac{n}{2}\rceil$.
\end{conj}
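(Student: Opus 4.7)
The plan is to prove the stronger lower bound $pn(\mathcal{E}_{c}(K_{2m+1})) \geq m+1$, which, combined with the upper bound from Theorem 3.7, yields the conjectured equality. I would proceed by contradiction: suppose $\mathcal{E}_{c}(K_{2m+1})$ admits a book embedding in only $m$ pages, fix such a hypothetical spine order and page assignment, and derive a structural contradiction.

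The first step is to exploit the $2m+1$ internal copies of $K_{2m}$ living inside $\mathcal{E}_{c}(K_{2m+1})$. Since $pn(K_{2m})=m$, in the hypothetical $m$-page embedding every small clique $K_{2m}$ must itself be embedded optimally and therefore use all $m$ pages. I would first establish that, regardless of how the $n(n-1)$ spine vertices are interleaved, the restriction of each page to any single small $K_{2m}$ is an outerplanar subgraph with essentially the maximal number of edges admissible by the outerplanar bound $2v-3$ — essentially a zigzag Hamiltonian path on the spine order of that clique. This observation severely restricts how much ``room'' remains on any page to route the matching edges of $E_2$.

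The next step is to analyze the $\binom{2m+1}{2}=m(2m+1)$ matching edges of $E_2$ connecting the small cliques to one another. They too must be assigned to these $m$ pages. The plan is to use a counting or parity argument, in the spirit of the classical proof that $pn(K_{2m+1})=m+1$, to show that these matching edges, together with the already committed zigzag structure inside each small $K_{2m}$, cannot be simultaneously drawn without creating a crossing. Concretely, one would track, for each page and each small $K_{2m}$, the pattern of ``free'' nesting levels left by the zigzag, and argue that the global system of matching edges, which forms a $K_{2m+1}$-like structure on the macro level of blocks, cannot be slotted into these gaps within only $m$ pages.

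The main obstacle, and the reason this is stated as an open problem, is the large combinatorial freedom in the spine ordering: the vertices of the $2m+1$ small $K_{2m}$'s may be arbitrarily interleaved rather than placed in contiguous blocks, so each small clique's zigzag may itself weave through vertices belonging to other cliques. A complete proof must therefore either (i) reduce the general case to the contiguous-block case via an exchange argument on the spine that preserves the page count, or (ii) find an invariant — for instance a weighted edge-density bound or a book-crossing-number inequality — that forces the extra page independently of how the spine is interleaved. A further alternative worth pursuing is to identify a specific topological subgraph of $\mathcal{E}_{c}(K_{2m+1})$ whose pagenumber already equals $m+1$, generalizing the nonplanar $K_5$-minor argument used in the base case $m=2$ highlighted in the paper.
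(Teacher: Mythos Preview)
The statement you are addressing is presented in the paper as Conjecture~4.1, in the section explicitly titled ``Open problem''; the paper does \emph{not} supply a proof. There is therefore no argument in the paper to compare your proposal against. All the paper establishes is the two-sided bound $m \le pn(\mathcal{E}_{c}(K_{2m+1})) \le m+1$ of Theorem~3.7, together with the single verified case $m=2$ obtained from nonplanarity of $\mathcal{E}_{c}(K_{5})$.

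Your proposal is itself explicitly a plan rather than a proof, and you correctly flag the chief obstruction (arbitrary interleaving of the small cliques along the spine). One technical step in the outline is shaky, however: you assert that in a hypothetical $m$-page embedding the restriction of each page to a fixed small $K_{2m}$ must carry ``essentially the maximal number of edges admissible by the outerplanar bound $2v-3$'' and hence be a zigzag Hamiltonian path. But $K_{2m}$ has only $m(2m-1)$ edges, so the average load per page is $2m-1$, well below the outerplanar maximum $4m-3$ on $2m$ vertices; nothing forces the pages to be balanced or to be zigzag paths, and optimal $m$-page embeddings of $K_{2m}$ are far from unique. Any counting argument built on that premise would require a much finer structural classification of \emph{all} $m$-page embeddings of $K_{2m}$ than the crude outerplanar edge bound can provide. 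The alternative you mention at the end---locating an explicit subgraph of pagenumber $m+1$, generalizing the $K_{5}$-minor trick for $m=2$---is probably the more tractable line, but as the paper itself states, the question remains open.
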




\end{document}